\documentclass[12pt]{article}
\usepackage{amsmath,amssymb,amsthm}
\usepackage{graphics}

\usepackage{natbib}

\pdfoutput=1

\newcommand{\x}{\mathbf{x}}
\newcommand{\n}{\mathbf{n}}
\newcommand{\E}{\mathbf{E}}
\newcommand{\J}{\mathbf{J}}

\newcommand{\Et}[1]{\mathbf{E}_{t}^{#1}}
\newcommand{\Jt}[1]{\mathbf{J}_{t}^{#1}}
\newcommand{\En}[1]{\mathbf{E}_{\mathbf{n}}^{#1}}
\newcommand{\Jn}[1]{\mathbf{J}_{\mathbf{n}}^{#1}}

\newcommand{\bfm}[1]{\mbox{\boldmath ${#1}$}}
\newcommand{\BGs}{\bfm\sigma}

\newtheorem{thm}{Theorem}[section]
\newtheorem{lemma}{Lemma}[section]

\numberwithin{equation}{section}

\begin{document}
\nocite{*}
\title{Effective Conductivities of Thin-Interphase Composites}
\author{Bacim Alali$^{\tiny a}$ and Graeme W. Milton$^{\tiny b}$\\
\
\footnotesize{$^{\tiny a}$ Department of Scientific Computing, Florida State University, Tallahassee, FL}\\
\footnotesize{$^{\tiny b}$ Mathematics Department, University of Utah, Salt Lake City, UT}}
\date{}
\maketitle

%
%
%
%


\begin{abstract}
A method is presented for approximating the effective conductivity of composite media with thin interphase regions, 
which is exact to first order in the interphase thickness. 
The approximations are computationally efficient in the sense the fields need to be computed
only in a reference composite in which the interphases have been replaced by perfect interfaces.
The results apply whether any two phases of the composite are separated by a single interphase or multiple interphases, whether 
the conductivities
of the composite phases are isotropic or anisotropic,  and whether the thickness of an interphase is uniform or varies as a 
function of position. It is assumed that the conductivities of the interphase materials have intermediate values as opposed to 
very high or very low conductivities.

\end{abstract}


\noindent \textit{Keywords:} Multiphase composites, Thin interphases, Effective conductivity.



\section{Introduction}
\label{intro}

Composite media with thin interphases between adjacent phases are used widely in engineering, biomedical, and technological applications. 
Examples include
coated fiber or coated particle reinforced materials and composites with adhesively bonded joints, see for instance
\cite{kim1998engineered,banea2009adhesively,ramakrishna2001biomedical}.
In this work we consider the problem of computing the effective conductivity in a composite with  thin interphase regions
between some of the composite's constituents, see Figure~\ref{fig:interphases}. The fact that the composite has a thin interphase (or interphases) makes
 the numerical computations of the fields and the effective properties a difficult task. 
Therefore, approximate models of thin interphase 
composites that do not require solving for the fields inside the interphase regions are desirable, \cite{benveniste2012two}.
 
\begin{figure}[t]

\centering
\scalebox{0.115}{\includegraphics{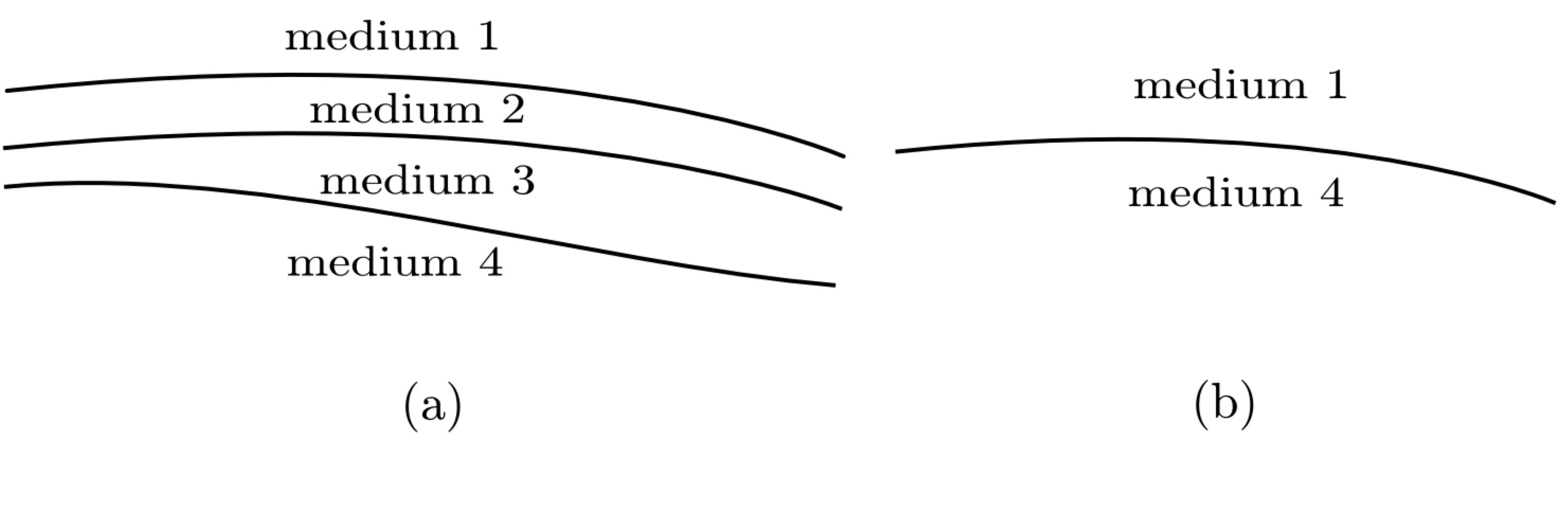}}
\caption{(a) A composite in which media 2 and 3 form an interphase region between media 1 and 4. (b) A reference composite in which the interphases
of the composite in (a) have been replaced by an interface.}
\label{fig:interphases}
\end{figure}

There has been an extensive study of interface models, in which an interphase region between 
two media is replaced by an interface, allowing direct contact of the media, along with appropriate interface conditions.
This interface is usually referred to as an imperfect interface when either the potential or the normal flux component 
has a jump discontinuity across the interface. The effective conductivity in composites with highly conducting
or poorly conducting imperfect interfaces has been studied by
\cite{Benveniste19861537,benveniste1987effective,torquato1995effect,lipton1996composites,lipton1997variational, miloh1999effective,
lipton2001bounds, le2010size}, among others. Numerical methods for computing the effective conductivity in composites with highly conducting
or poorly conducting imperfect interfaces have been recently developed by \cite{yvonnet2008numerical, yvonnet2011general}.

Although the majority of studies in the literature of thin interphases focus on the case when there is a high contrast 
between the conductivities of the interphase and its adjacent media, there have been some works that deal with intermediate 
values of conductivities without the high-contrast assumption. Notable examples of such studies include the work of 
 \cite{hashin2001thin} and the works of \cite{benveniste2006general,benveniste2006hn,benveniste2007interface,benveniste2012two}.
In the work of \cite{hashin2001thin} an interface model is introduced, in which the interphase's conductivity is arbitrary
ranging from zero to infinity, and the effect of the interphase on the effective conductivity is discussed. However, this work 
is restricted to having isotropic and homogeneous phases and the thickness of the interphase is assumed to be constant. 
In the above studies by Benveniste et. al. two models of thin interphase materials are developed of which  one is an interface model
with the interphase replaced by an interface. In the second model the geometry of the thin interphase is left intact and 
is characterized by conditions
in which the fields are evaluated in the adjacent media at both sides of the interphase and do not involve the
fields within the interphase.
Some of the interesting features of these models are that the interphase can have anisotropic and inhomogeneous conductivities.
However, it is assumed that the thickness of the interphase region is constant. It is also noted that
these studies do not discuss the effect of the thin interphase on the effective conductivity.
For a more comprehensive review of the topic (on the modeling of thin interphases), we refer the reader to the works given in
\cite{benveniste2006general,Gu20111413}.




In this work we present a novel method that provides an approximation to the effective conductivity
and combines different desirable features. 
These include that the method is exact to first order in the interphase thickness and that 
the method is computationally inexpensive in the sense that it does not involve computing the fields within the interphase region.
Additionally, the method applies for homogeneous as well as
inhomogeneous interphases, that is whether a single interphase 
separates two materials or multiple (possibly infinitely many) interphases. 
Moreover, the conductivities
of the composite components, including the interphases, can be isotropic or anisotropic. 
Furthermore, the geometry of the interphase
is arbitrary and its thickness $h$ is not assumed to be constant but can vary slowly, on a length scale
large compared to $h$. 
However, the total
thickness of the interphase region must be sufficiently small in order for our approximation of the effective conductivity to hold.
It is also assumed that the interphase conductivities have intermediate values as opposed to 
very high or very low conductivities. Numerical results for a particular example of a thin-interphase composite show
that our approximation agrees very well with the exact effective conductivity when
the interphase is thin and has intermediate values of conductivity. 

Our method is based on estimating the change in the effective conductivity as the total thickness of the interphases vanishes. Thus 
we  compare the effective conductivity of the composite to a reference composite in which the interphase materials in the original 
composite  have been removed and replaced by a perfect interface, with the usual conditions of continuity of potential and continuity of flux
across this interface . Computing the effective conductivity and the fields inside the reference
composite is an easier problem that may, for example, be solved using integral equations. We refer to this as the unperturbed problem.
The method is based on the following estimate, which holds when the total thickness of the interphases is sufficiently small,
\begin{equation}
 \label{segma_effctive_appx}
\BGs^*\approx\widetilde{\BGs}^*= \BGs^*_0 + \delta\BGs^*.
\end{equation}
Here $\BGs^*$ is the effective conductivity of the composite with thin interphases, 
$\widetilde{\BGs}^*$ is our approximation,  
$\BGs^*_0$ is the 
effective conductivity of the identical composite but with the interphase materials removed and replaced by one material of 
the two neighboring phases, and  $\delta \BGs^*$ is the change in the 
effective conductivity due to inserting the interphase materials.
The formulas that we
present and derive in Section~\ref{sec:interphase} show that  in order to compute $\delta\BGs^*$ we need to compute the fields 
only at material interfaces inside the unperturbed problem (or reference composite).

We emphasize that the method introduced here is quite general and can easily be extended to elasticity, piezoelectricity or other 
coupled field problems. Indeed its basis is the interface-shift formula of \cite{Milton:2002:TOC} which was developed 
in this more general context. We have chosen to focus on the conductivity case 
for simplicity and to make the presentation less abstract. 

This article is organized as follows. In Section~\ref{sec:rev}, a review is given for computing the change  in the effective conductivity due to translating an interface.
In Section~\ref{sec:interphase}, a formula for computing the change  in the effective conductivity due to inserting an interphase
region is derived. Finally, Section~\ref{sec:numerical} provides numerical results for  the effective conductivity
of the doubly coated sphere assemblage and shows a comparison between  our approximation, the exact result, and two other 
known approximations.

\section{Review}
\label{sec:rev}
In this section we review how the effective conductivity of a composite changes due to a shift of a phase boundary. 
The review here is based on the treatment given in Section 16.6 of \cite{Milton:2002:TOC}. For completeness of the presentation, we 
will include a derivation of Milton's interface-shift formula (\ref{interface_formula1}), in the case of the conductivity of composites.

Consider a periodic composite with a period cell given by $\Omega\subset\mathbb{R}^d$. Let $\Gamma=\Gamma_\eta$ be an 
interface between two phases, where $\eta$ is a continuous parameter such that the position of the interface $\Gamma_\eta$ 
changes when $\eta$ changes. We denote by $\BGs_\eta$ the conductivity inside the composite and assume that $\BGs_\eta(\x)=\BGs^i$ 
for $\x$ inside the $i$-th phase, where $\BGs^i$ is a 
symmetric
matrix representing an anisotropic 
conductivity. 
The electric 
and current fields are denoted by $\E_\eta$ and $\J_\eta$, respectively, and satisfy the differential constraints,
\begin{eqnarray}
\label{pde}
\nabla\cdot\J_\eta=0, \;\;\; \E_\eta=\nabla u_\eta,\;\;\; \J_\eta=\BGs_\eta\E_\eta,\;\;\;\langle \E_\eta\rangle=\E_0,
\end{eqnarray}
where $u_\eta$ is the electric potential and $\E_0$ is the applied electric field which is chosen to be independent of $\eta$. Here, and in the rest of this article, the brackets $\langle\cdot\rangle$ denote taking the average over $\Omega$, that is,
\[
\langle F \rangle=\frac{1}{|\Omega|} \int_\Omega F.
\] 
We assume that the potential $u_\eta$ and the flux $\n\cdot\J_\eta$ are continuous across the interface $\Gamma_\eta$, where $\n$ is the normal to the interface. These continuity conditions can be explicitly written as
\begin{eqnarray}
\label{interface_conds}
u_\eta^+(\x)=u_\eta^-(\x), \;\;\; \n(\x)\cdot\J_\eta^+(\x)=\n(\x)\cdot\J_\eta^-(\x),\;\;\;\mbox{ for all } \x\in\Gamma_\eta,
\end{eqnarray}
where the plus and minus superscripts denote the two sides of the interface with the convention that the normal is directed outward from the $+$ side pointing toward the $-$ side, see Figure~\ref{fig:main}.

\begin{figure}[t]

\centering
\scalebox{0.065}{\includegraphics{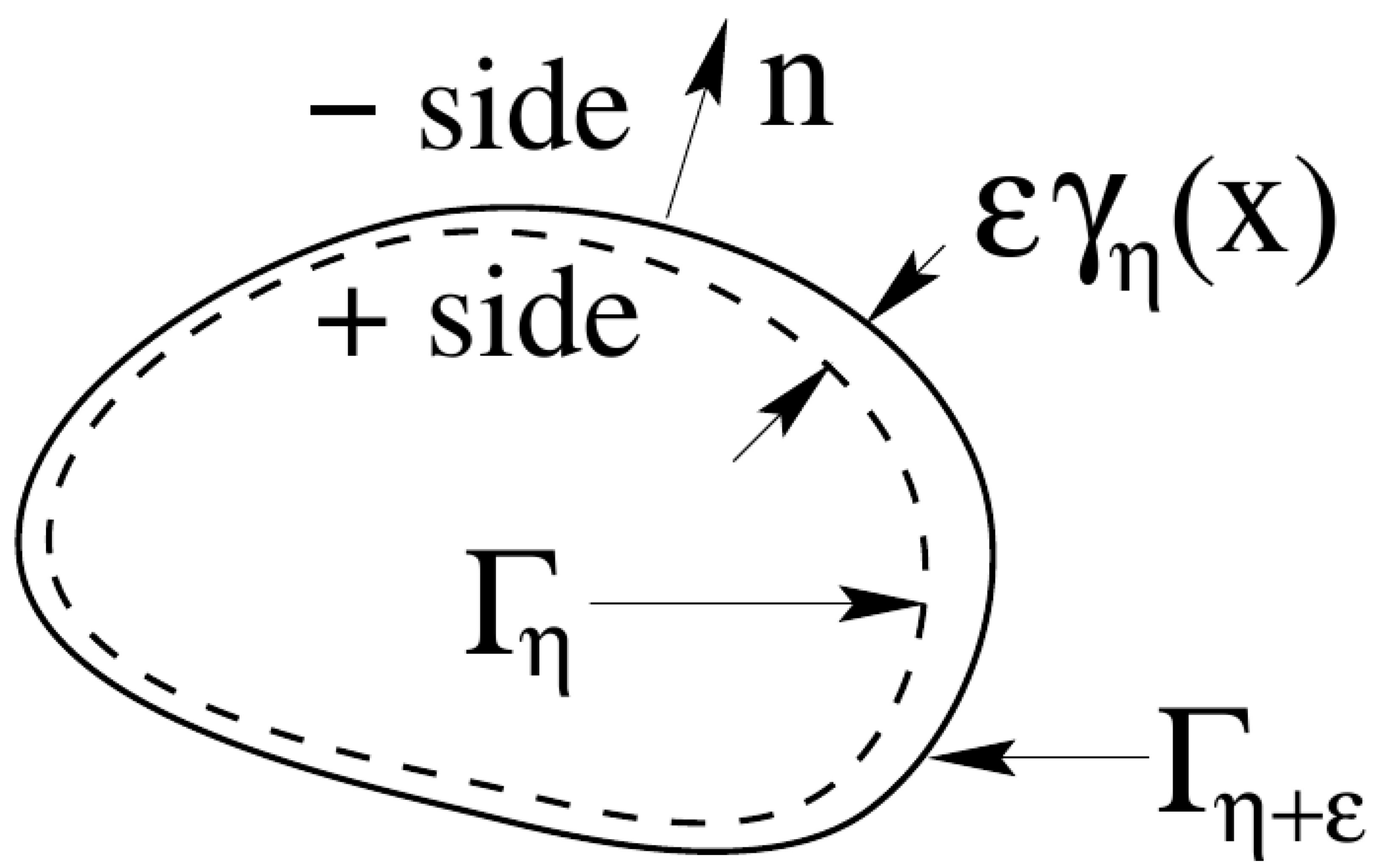}} 
\caption{Perturbation of an interface separating two phases. The distance between the interfaces $\Gamma_\eta$ and $\Gamma_{\eta+\epsilon}$
is approximated, to the first order in $\epsilon$, by $\epsilon |\gamma_\eta(\x)|$.}
\label{fig:main}
\end{figure}

For a small perturbation in $\eta$, $ \eta\rightarrow \eta+\epsilon$, the translated interface $\Gamma_{\eta+\epsilon}$ can be approximated to the first order in $\epsilon$ by
\begin{eqnarray}
\label{Gamma_appx}
\Gamma'_{\eta,\epsilon}=\{\x'|\x'=\x+\epsilon \gamma_\eta(\x) \n(\x),\;\; \x\in\Gamma_\eta\},
\end{eqnarray}
where $\gamma_\eta$ is a scalar such that the quantity $|\epsilon\gamma_\eta(\x)|$ represents the distance traveled by the point $\x\in\Gamma_\eta$ along the direction of $\n(\x)$. We note here that the fact that the potential $u_{\eta+\epsilon}$ is continuous across $\Gamma_{\eta+\epsilon}$ implies that, to the first order in $\epsilon$, 
\begin{eqnarray}
\label{u_at_Gamma_appx}
u_{\eta+\epsilon}^+(\x+\epsilon\gamma_\eta(\x)\n(\x))\approx u_{\eta+\epsilon}^-(\x+\epsilon\gamma_\eta(\x)\n(\x)).
\end{eqnarray}

Let $\BGs^*_\eta$ be the effective conductivity of the composite. Then the derivative of the energy with respect to $\eta$ is given by 
\begin{eqnarray}
\label{d_deta}
\nonumber
\frac{d}{d\eta}\left(\E_0\cdot\BGs^*_\eta\E_0\right) &=& 
\frac{d}{d\eta}\langle\E_\eta\cdot\BGs_\eta\E_\eta\rangle \\
\nonumber
&=& \frac{2}{|\Omega|}\int_{\Omega \setminus\ \Gamma_\eta}\frac{d \E_\eta}{d\eta}\cdot\BGs_\eta\E_\eta\\
& & +
\frac{1}{|\Omega|}\int_{\Gamma_\eta}\gamma_\eta
[\E_\eta^{+}\cdot\BGs^{+}\E_\eta^{+}- \E_\eta^{-}\cdot\BGs^{-}\E_\eta^{-}],
\end{eqnarray}
where the first integral represents the change in the energy due to variations in the field $\displaystyle\frac{d \E_\eta}{d\eta}$, treating the interface as fixed, and the second integral represents the change in the energy due to translating the interface, treating the field $\E_\eta$ as fixed outside the neighborhood of the interface.

Integrating by parts, the first integral in (\ref{d_deta}) becomes
\begin{eqnarray}
\label{d_deta_core_integ}
\nonumber
\int_{\Omega \setminus\ \Gamma_\eta}\frac{d \E_\eta}{d\eta}\cdot\BGs_\eta\E_\eta &=& \int_{\Gamma_\eta} \frac{\partial u^{+}_{\eta}}{\partial \eta} (\n\cdot\J^+_\eta)-\frac{\partial u^{-}_{\eta}}{\partial \eta} (\n\cdot\J^-_\eta)\\
&=& \int_{\Gamma_\eta} \left(\frac{\partial u^+_\eta}{\partial \eta}-\frac{\partial u^-_\eta}{\partial \eta}\right)(\n\cdot\J^+_\eta),
\end{eqnarray}
where in the last equality we used the continuity of the flux at the interface which is given by (\ref{interface_conds}). 
By expanding (\ref{u_at_Gamma_appx}) and equating the terms of order $\epsilon$, we obtain
\begin{eqnarray}
\label{gradu_at_Gamma_appx}
\frac{\partial u^+_\eta}{\partial \eta}+\gamma_\eta \n\cdot\nabla u^+_\eta= \frac{\partial u^-_\eta}{\partial \eta}+\gamma_\eta \n\cdot\nabla u^-_\eta.
\end{eqnarray}
By combining (\ref{gradu_at_Gamma_appx}) and (\ref{d_deta_core_integ}), we see that
\begin{eqnarray}
\label{d_deta_core_integ2}
\nonumber
\int_{\Omega \setminus\ \Gamma_\eta}\frac{d \E_\eta}{d\eta}\cdot\BGs_\eta\E_\eta &=& \int_{\Gamma_\eta} \gamma_\eta  \n\cdot\left(\nabla u^+_\eta- \nabla u^-_\eta\right) (\n\cdot\J^+_\eta)\\
&=& \int_{\Gamma_\eta} \gamma_\eta   \left(\nabla u^+_\eta- \nabla u^-_\eta\right)\cdot (\n\otimes\n) \J^+_\eta.
\end{eqnarray}

Let $w_1(\x),w_2(\x),\ldots,w_{d-1}(\x)$, and $\n(\x)$ be an orthonormal basis at each point $\x\in\Gamma_\eta$. Then since the potential is continuous across the interface, it follows that the tangential derivative of the potential is also continuous across  the interface and hence
\begin{eqnarray}
\nonumber
w_i\cdot\left(\nabla u^+_\eta- \nabla u^-_\eta\right)=0, \;\; \mbox{ for } i=1,2,\ldots,d-1.
\end{eqnarray}
Using this fact, (\ref{d_deta_core_integ2}) can be written as
\begin{eqnarray}
\label{d_deta_core_integ3}
\nonumber
\int_{\Omega \setminus\ \Gamma_\eta}\frac{d \E_\eta}{d\eta}\cdot\BGs_\eta\E_\eta &=& \int_{\Gamma_\eta} \gamma_\eta   \left(\nabla u^+_\eta- \nabla u^-_\eta\right)\cdot (\sum_{i=1}^{d-1}w_i\otimes w_i+\n\otimes\n) \J^+_\eta\\
\nonumber
&=&\int_{\Gamma_\eta} \gamma_\eta   \left(\nabla u^+_\eta- \nabla u^-_\eta\right)\cdot \J^+_\eta\\
&=&\int_{\Gamma_\eta} \gamma_\eta   \left(\E^-_\eta-\E^+_\eta\right)\cdot \BGs^+\E^+_\eta.
\end{eqnarray}
Using the same arguments above, only replacing $\n\cdot\J^+_\eta$  by $\n\cdot\J^-_\eta$ in (\ref{d_deta_core_integ}), gives
\begin{eqnarray}
\label{d_deta_core_integ4}
\int_{\Omega \setminus\ \Gamma_\eta}\frac{d \E_\eta}{d\eta}\cdot\BGs_\eta\E_\eta =\int_{\Gamma_\eta} \gamma_\eta   \left(\E^-_\eta-\E^+_\eta\right)\cdot \BGs^-\E^-_\eta.
\end{eqnarray}
By substituting (\ref{d_deta_core_integ3}) and (\ref{d_deta_core_integ4}) back into (\ref{d_deta}), we obtain
\begin{eqnarray}
\label{d_deta2}
\nonumber
\E_0\cdot\frac{d \BGs^*}{d\eta}\E_0&=&\frac{d}{d\eta}\left(\E_0\cdot\BGs^*_\eta\E_0\right) \\
\nonumber
&=& 
\frac{1}{|\Omega|}\int_{\Gamma_\eta}\gamma_\eta
[\E_\eta^{-}\cdot\BGs^{+}\E_\eta^{+}- \E_\eta^{+}\cdot\BGs^{-}\E_\eta^{-}]\\
&=& \frac{1}{|\Omega|}\int_{\Gamma_\eta}\gamma_\eta
[\E_\eta^{-}\cdot\J_\eta^{+}- \E_\eta^{+}\cdot\J_\eta^{-}].
\end{eqnarray}

The following theorem summarizes the results of this section.
\begin{thm}[\cite{Milton:2002:TOC}]
\label{thm:main}
Assume that an interface $\Gamma$ is translated by a distance $\epsilon \gamma(\cdot)$ in the direction normal to the interface, keeping the applied electric field $\langle \E\rangle$ fixed. Then the resultant change in the energy, to the first order in $\epsilon$, is given by
\begin{equation}
\label{interface_formula1}
\langle \E\rangle\cdot\delta\BGs^*\langle \E\rangle\approx 
\frac{1}{|\Omega|}\int_{\Gamma} \epsilon \gamma(\x)
\left[\E^{-}(\x)\cdot\J^{+}(\x)-\E^{+}(\x)\cdot\J^{-}(\x)\right]\;d\x,
\end{equation}
where $\delta\BGs^*$ denotes the change in the effective conductivity $\BGs^*$.
\end{thm}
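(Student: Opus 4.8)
The plan is to obtain the formula by differentiating the effective energy with respect to the interface-position parameter $\eta$ and then linearizing in the perturbation size $\epsilon$. Since $\langle\E_\eta\rangle=\E_0$, the effective energy is $\E_0\cdot\BGs^*_\eta\E_0=\langle\E_\eta\cdot\BGs_\eta\E_\eta\rangle$, and I would start from the decomposition \reff{d_deta} of $\frac{d}{d\eta}\langle\E_\eta\cdot\BGs_\eta\E_\eta\rangle$ into two contributions: a bulk term $\frac{2}{|\Omega|}\int_{\Omega\setminus\Gamma_\eta}\frac{d\E_\eta}{d\eta}\cdot\BGs_\eta\E_\eta$, coming from the variation of the field with the interface held fixed (the factor of two here reflecting the symmetry of each $\BGs^i$), and a surface term $\frac{1}{|\Omega|}\int_{\Gamma_\eta}\gamma_\eta[\E_\eta^+\cdot\BGs^+\E_\eta^+-\E_\eta^-\cdot\BGs^-\E_\eta^-]$, coming from the energy density of the thin shell swept out by the moving interface with the field held fixed there. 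Establishing this splitting cleanly is a Reynolds-transport-type computation on the $\eta$-dependent domain, and it is the step where one must be careful about what ``field fixed'' versus ``interface fixed'' means in a neighbourhood of the boundary and about discarding terms of order $\epsilon^2$.

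Next I would convert the bulk term into a surface integral. Writing $\frac{d\E_\eta}{d\eta}=\nabla\frac{\partial u_\eta}{\partial\eta}$, integrating by parts, and using $\nabla\cdot\J_\eta=0$ in each phase together with the continuity of the normal flux from \reff{interface_conds}, the bulk integral collapses to $\int_{\Gamma_\eta}\big(\frac{\partial u_\eta^+}{\partial\eta}-\frac{\partial u_\eta^-}{\partial\eta}\big)(\n\cdot\J_\eta^+)$, as in \reff{d_deta_core_integ}, with the periodic boundary contributions cancelling. To remove the jump in $\partial u_\eta/\partial\eta$ I would differentiate the continuity relation \reff{u_at_Gamma_appx} for $u_{\eta+\epsilon}$ on the translated interface and match the order-$\epsilon$ terms, which is \reff{gradu_at_Gamma_appx} and gives $\frac{\partial u_\eta^+}{\partial\eta}-\frac{\partial u_\eta^-}{\partial\eta}=\gamma_\eta\,\n\cdot(\nabla u_\eta^--\nabla u_\eta^+)$. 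Substituting puts the bulk term in the form $\int_{\Gamma_\eta}\gamma_\eta(\E_\eta^--\E_\eta^+)\cdot(\n\otimes\n)\J_\eta^+$, and since continuity of $u_\eta$ across $\Gamma_\eta$ forces continuity of its tangential derivatives, one may add the vanishing terms $(\E_\eta^--\E_\eta^+)\cdot(w_i\otimes w_i)\J_\eta^+$ for a tangential orthonormal frame $w_1,\dots,w_{d-1}$; the projector $\n\otimes\n$ is thereby promoted to the identity and the bulk term equals $\int_{\Gamma_\eta}\gamma_\eta(\E_\eta^--\E_\eta^+)\cdot\BGs^+\E_\eta^+$, which is \reff{d_deta_core_integ3}. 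Running the same argument but carrying $\n\cdot\J_\eta^-$ through \reff{d_deta_core_integ} instead yields the companion identity $\int_{\Gamma_\eta}\gamma_\eta(\E_\eta^--\E_\eta^+)\cdot\BGs^-\E_\eta^-$ for the very same quantity, as in \reff{d_deta_core_integ4}.

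Finally I would assemble the pieces. Since the bulk contribution enters \reff{d_deta} with a factor of two, I use the $\BGs^+$-representation for one copy and the $\BGs^-$-representation for the other; adding in the interface-translation surface term of \reff{d_deta}, the quadratic forms $\E^+\cdot\BGs^+\E^+$ and $\E^-\cdot\BGs^-\E^-$ cancel in pairs, and recalling the constitutive law $\J_\eta^\pm=\BGs^\pm\E_\eta^\pm$ one is left with
\[
\E_0\cdot\frac{d\BGs^*}{d\eta}\E_0=\frac{1}{|\Omega|}\int_{\Gamma_\eta}\gamma_\eta\big[\E_\eta^-\cdot\J_\eta^+-\E_\eta^+\cdot\J_\eta^-\big],
\]
which is \reff{d_deta2}. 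Multiplying by $\epsilon$ — equivalently, integrating over the parameter interval and keeping only the first-order term — and relabeling $\langle\E\rangle=\E_0$ gives \reff{interface_formula1}. I expect the only genuinely delicate point to be the very first one, namely the rigorous justification of the ``field varies / interface fixed'' plus ``interface moves / field fixed'' decomposition of $\frac{d}{d\eta}$ of the energy and the accompanying control of the $O(\epsilon^2)$ error; once that is in place the remaining steps are integrations by parts and elementary interface identities forced by the continuity conditions \reff{interface_conds}.
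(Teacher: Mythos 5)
Your proposal follows the paper's own derivation essentially step for step: the same bulk-plus-surface decomposition \reff{d_deta}, the same integration by parts to \reff{d_deta_core_integ}, the same matching of order-$\epsilon$ terms in \reff{u_at_Gamma_appx} to get \reff{gradu_at_Gamma_appx}, the same promotion of $\n\otimes\n$ to the identity via tangential continuity, and the same pairing of the $\BGs^+$- and $\BGs^-$-representations of the bulk term against the swept-shell surface term to reach \reff{d_deta2}. It is correct (your intermediate signs are in fact cleaner than the paper's, which carries a harmless sign slip between \reff{d_deta_core_integ2} and the last line of \reff{d_deta_core_integ3}), and your identification of the decomposition \reff{d_deta} as the one step taken on faith matches the paper, which likewise asserts it without a Reynolds-transport justification.
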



\section{Change in the effective conductivity of thin-interphase composites}
\label{sec:interphase}

Consider a multi-phase composite consisting of a finite number of homogeneous constituents. Let $\BGs^0$ and $\BGs^1$ be the anisotropic conductivities  of two of these constituents such that between them lies a set of $K-1$ thin interphases, whose total thickness is $h$, having anisotropic conductivities given by $\BGs^2,\BGs^3, \ldots, \BGs^K$. We analyze the  change in the effective conductivity of this composite as the interphases total thickness vanishes ($h\rightarrow 0$). In the limit, the interphases are replaced by an interface between the $0$-phase and the $1$-phase of this composite.

In Sections \ref{sec:single} and \ref{sec:double} we present and derive formulas for this change in the effective conductivity for the case of a single thin interphase and for the case of two   thin interphases, respectively. The  general case of a finite set of thin interphases is discussed in Subsection \ref{sec:multiple}. Section \ref{sec:graded} focuses on the case when the interphase conductivity is continuously varying.

\subsection{Preliminaries}
\label{sec:pre}
From Theorem \ref{thm:main}, translating an interface $\Gamma$ by a distance $h$ in the direction normal to the interface, keeping $\langle \E\rangle$ fixed, results in a change in the energy, to the first order in $h$, given by
\begin{equation}
\label{interface_formula2}
\langle \E\rangle\cdot\delta\BGs^*\langle \E\rangle\approx 
\frac{1}{|\Omega|}\int_{\Gamma} h 
\left[\E^{-}\cdot\J^{+}-\E^{+}\cdot\J^{-}\right].
\end{equation} 
Note that the integration variable in (\ref{interface_formula2}) has been suppressed for conciseness. This result can be generalized to include the case of simultaneously shifting more than one interface. Specifically, if a finite number of interfaces $\Gamma_1,\ldots,\Gamma_N$ are shifted by distances $t_1,\ldots,t_N$, in the directions $\n_1,\ldots,\n_N$, respectively, where $\n_i$ is the direction normal to interface $\Gamma_i$, then the resultant change in the energy is given by
\begin{equation}
\label{interface_formula3}
\langle \E\rangle\cdot\delta\BGs^*\langle \E\rangle\approx 
\sum_{i=1}^{N}\frac{1}{|\Omega|}\int_{\Gamma_i} t_i 
\left[\E_i^{-}\cdot\J_i^{+}-\E_i^{+}\cdot\J_i^{-}\right].
\end{equation}

For a field $\mathbf F$, we denote by $\mathbf F_t$ and $\mathbf F_n$ the tangential and normal components of the field, respectively. 
Using this notation, we present the following lemma which provides an equivalent yet useful way for writing (\ref{interface_formula2}).
\begin{lemma}
\label{lem:1}
Let $\BGs^+$ and $\BGs^-$ be the conductivities of phase $+$ and phase $-$ as described above. Then
the change in the effective conductivity is given by
\begin{eqnarray}
\label{interface_formula2_equiv1}
\langle \E\rangle\cdot\delta\BGs^*\langle \E\rangle&\approx&\frac{1}{|\Omega|}\int_{\Gamma} h 
\left[\E^{-}\cdot\J^{+}-\E^{+}\cdot\J^{-}\right]\\
\nonumber
&=&\frac{1}{|\Omega|}\int_{\Gamma} h \left[
(\BGs^+-\BGs^-) \Et{+}\cdot\Et{+}-\left((\BGs^+)^{-1}-(\BGs^-)^{-1}\right)\Jn{+}\cdot \Jn{+}\right].\\
\label{interface_formula2_equiv2}
\end{eqnarray}
\end{lemma}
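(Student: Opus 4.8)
The first relation in the lemma is just \reff{interface_formula2} (Theorem~\ref{thm:main}), so the content to prove is the second equality. Both sides are integrals over $\Gamma$ against the same weight $h$, so it is enough to show that the two integrands agree at each point $\x\in\Gamma$, namely that
\begin{equation}
\label{eq:pointwise_claim}
\E^{-}\cdot\J^{+}-\E^{+}\cdot\J^{-}=(\BGs^{+}-\BGs^{-})\Et{+}\cdot\Et{+}-\left((\BGs^{+})^{-1}-(\BGs^{-})^{-1}\right)\Jn{+}\cdot\Jn{+};
\end{equation}
multiplying by $h$, integrating over $\Gamma$ and dividing by $|\Omega|$ then gives \reff{interface_formula2_equiv2}. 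The plan is to feed the interface conditions \reff{interface_conds} into the left-hand side of \reff{eq:pointwise_claim} and re-express everything through the field components that are continuous across $\Gamma$.

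First I would record the continuity consequences. Continuity of the potential $u$ across $\Gamma$ forces its tangential derivatives to agree, hence the tangential part of $\E=\nabla u$ is continuous: $\Et{+}=\Et{-}$, a common value I write $\Ett$. Continuity of $\n\cdot\J$ makes the normal part of $\J$ continuous: $\Jn{+}=\Jn{-}$, which I write $\Jnn$. Writing $\E^{\pm}=\Ett+\En{\pm}$ and $\J^{\pm}=\Jt{\pm}+\Jnn$, substituting into $\E^{-}\cdot\J^{+}-\E^{+}\cdot\J^{-}$ and expanding, every product of a tangential vector with a normal vector vanishes, so the left-hand side of \reff{eq:pointwise_claim} collapses to
\begin{equation}
\label{eq:reduced_integrand}
\E^{-}\cdot\J^{+}-\E^{+}\cdot\J^{-}=\Ett\cdot\left(\Jt{+}-\Jt{-}\right)+\left(\En{-}-\En{+}\right)\cdot\Jnn .
\end{equation}

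It remains to turn the jump $\Jt{+}-\Jt{-}$ in the tangential current into a term proportional to $\BGs^{+}-\BGs^{-}$ applied to $\Ett$, and the jump $\En{-}-\En{+}$ in the normal field into a term proportional to $(\BGs^{+})^{-1}-(\BGs^{-})^{-1}$ applied to $\Jnn$. For this one uses $\J^{\pm}=\BGs^{\pm}\E^{\pm}$, $\E^{\pm}=(\BGs^{\pm})^{-1}\J^{\pm}$, the symmetry of $\BGs^{\pm}$, and the identities $\Ett\cdot\Jt{\pm}=\Ett\cdot\J^{\pm}$ and $\En{\pm}\cdot\Jnn=\En{\pm}\cdot\J^{\pm}$ (valid because the pieces dropped there are orthogonal to $\Ett$, respectively to $\Jnn$). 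For isotropic phases this is immediate: $\BGs^{\pm}$ commutes with the tangential and normal projections, so $\Jt{\pm}=\BGs^{\pm}\Ett$ and $\En{\pm}=(\BGs^{\pm})^{-1}\Jnn$, and inserting these into \reff{eq:reduced_integrand} produces the right-hand side of \reff{eq:pointwise_claim}.

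The step I expect to be the main obstacle is this last identification in the genuinely anisotropic case. When $\BGs^{\pm}$ does not leave the splitting of $\mathbb{R}^{d}$ into the tangent plane and its normal line invariant, $\BGs^{\pm}\Ett$ acquires a normal component and $(\BGs^{\pm})^{-1}\Jnn$ a tangential one, so $\Jt{\pm}\neq\BGs^{\pm}\Ett$ and $\En{\pm}\neq(\BGs^{\pm})^{-1}\Jnn$, and \reff{eq:reduced_integrand} then carries extra terms coupling $\Ett$ to $\Jnn$. The crux is to check that these cross terms cancel; they manifestly do whenever $\n$ is an eigendirection of $\BGs^{\pm}$ at each point of $\Gamma$, in particular when the two adjacent phases are isotropic --- the situation in the examples of Section~\ref{sec:numerical}. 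Once \reff{eq:pointwise_claim} is established, integrating against $h/|\Omega|$ over $\Gamma$ finishes the proof.
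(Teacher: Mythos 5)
Your argument is the same as the paper's: both reduce \reff{interface_formula2_equiv2} to a pointwise identity on $\Gamma$, both use continuity of $u$ and of $\n\cdot\J$ to get $\Et{+}=\Et{-}$ and $\Jn{+}=\Jn{-}$, and both expand $\E^{-}\cdot\J^{+}-\E^{+}\cdot\J^{-}$ into tangential and normal pieces with the mixed tangential--normal dot products dropping out; your reduced form is exactly what the paper has after \reff{interface_formula2_equiv3}. Where you stop and flag an obstacle, the paper simply asserts the two identifications you are worried about: it replaces $\Et{-}\cdot\Jt{+}$ by $\Et{-}\cdot\BGs^{+}\Et{+}$ in \reff{interface_formula2_equiv3} and invokes ``$\En{-}=(\BGs^{-})^{-1}\Jn{-}$'' to reach \reff{interface_formula2_equiv4}. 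These are precisely your $\Jt{\pm}=\BGs^{\pm}\Et{\pm}$ and $\En{\pm}=(\BGs^{\pm})^{-1}\Jn{\pm}$, and they hold only when $\BGs^{\pm}$ preserves the tangential/normal splitting at each point of $\Gamma$, e.g.\ when $\n$ is an eigendirection of $\BGs^{\pm}$, in particular for isotropic phases. In that setting your proof is complete and coincides with the paper's.

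Your residual doubt about the genuinely anisotropic case is justified: the cross terms do not cancel. Pushing your expansion one step further, the left side of the pointwise identity equals the claimed right side plus the remainder
\[
R=\Et{+}\cdot\BGs^{+}\En{+}-\Et{+}\cdot\BGs^{-}\En{-}+\Jn{+}\cdot(\BGs^{-})^{-1}\Jt{-}-\Jn{+}\cdot(\BGs^{+})^{-1}\Jt{+},
\]
and a direct two-dimensional check with $\n=e_{2}$, $\BGs^{-}=I$, $\BGs^{+}=\bigl(\begin{smallmatrix}a&b\\ b&c\end{smallmatrix}\bigr)$, common tangential field $e$ and common normal flux $j$ gives
\[
R=\frac{2bej}{c}-\frac{b^{2}e^{2}}{c}+\frac{b^{2}j^{2}}{c(ac-b^{2})},
\]
which vanishes identically only for $b=0$. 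So the second equality of the lemma, as an algebraic consequence of the transmission conditions alone, requires the additional hypothesis that $\n$ is an eigenvector of $\BGs^{\pm}$ on $\Gamma$; neither your argument nor the paper's establishes it in full anisotropic generality, but yours has the merit of making the missing hypothesis explicit rather than absorbing it into an unjustified substitution.
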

\begin{proof}
We expand the integrand by writing the fields in terms of their tangential and normal components to obtain
\begin{eqnarray}
\nonumber
\E^{-}\cdot\J^{+}&=& (\Et{-}+\En{-})\cdot(\Jt{+}+\Jn{+})\\
&=&\Et{-}\cdot\BGs^{+}\Et{+}+\Et{-}\cdot\Jn{+}+\En{-}\cdot\Jt{+}+\En{-}\cdot \Jn{+}.
\label{interface_formula2_equiv3}
\end{eqnarray}
Using the facts that $\Et{-}=\Et{+}, \Jn{-}=\Jn{+}$, and $\En{-}=(\BGs^{-})^{-1}\Jn{-}$, (\ref{interface_formula2_equiv3}) becomes
\begin{eqnarray}
\E^{-}\cdot\J^{+}&=& 
\BGs^{+}\Et{+}\cdot\Et{+}+(\BGs^{-})^{-1}\Jn{+}\cdot\Jn{+}.
\label{interface_formula2_equiv4}
\end{eqnarray}
Similar arguments give
\begin{eqnarray}
\E^{+}\cdot\J^{-}&=& 
\BGs^{-}\Et{+}\cdot\Et{+}+(\BGs^{+})^{-1}\Jn{+}\cdot\Jn{+}.
\label{interface_formula2_equiv5}
\end{eqnarray}
Taking the difference between (\ref{interface_formula2_equiv4}) and (\ref{interface_formula2_equiv5}) and substituting the result in 
(\ref{interface_formula2_equiv1}) gives (\ref{interface_formula2_equiv2}), completing the proof.
\end{proof}
\noindent {\em Remark.}
\begin{enumerate}
\item[$\bullet$] Since $\Et{-}=\Et{+}, \Jn{-}=\Jn{+}$, 
(\ref{interface_formula2_equiv2}) can equivalently be written as
\end{enumerate}
\begin{eqnarray*}
\langle \E\rangle\cdot\delta\BGs^*\langle \E\rangle&\approx&\frac{1}{|\Omega|}\int_{\Gamma} h \left[
(\BGs^+-\BGs^-) \Et{-}\cdot\Et{-}-\left((\BGs^+)^{-1}-(\BGs^-)^{-1}\right)\Jn{-}\cdot \Jn{-}\right].\\
\end{eqnarray*}
\subsection{Single interphase}
\label{sec:single}
\begin{figure}[t]
\centering
\scalebox{0.08}{\includegraphics{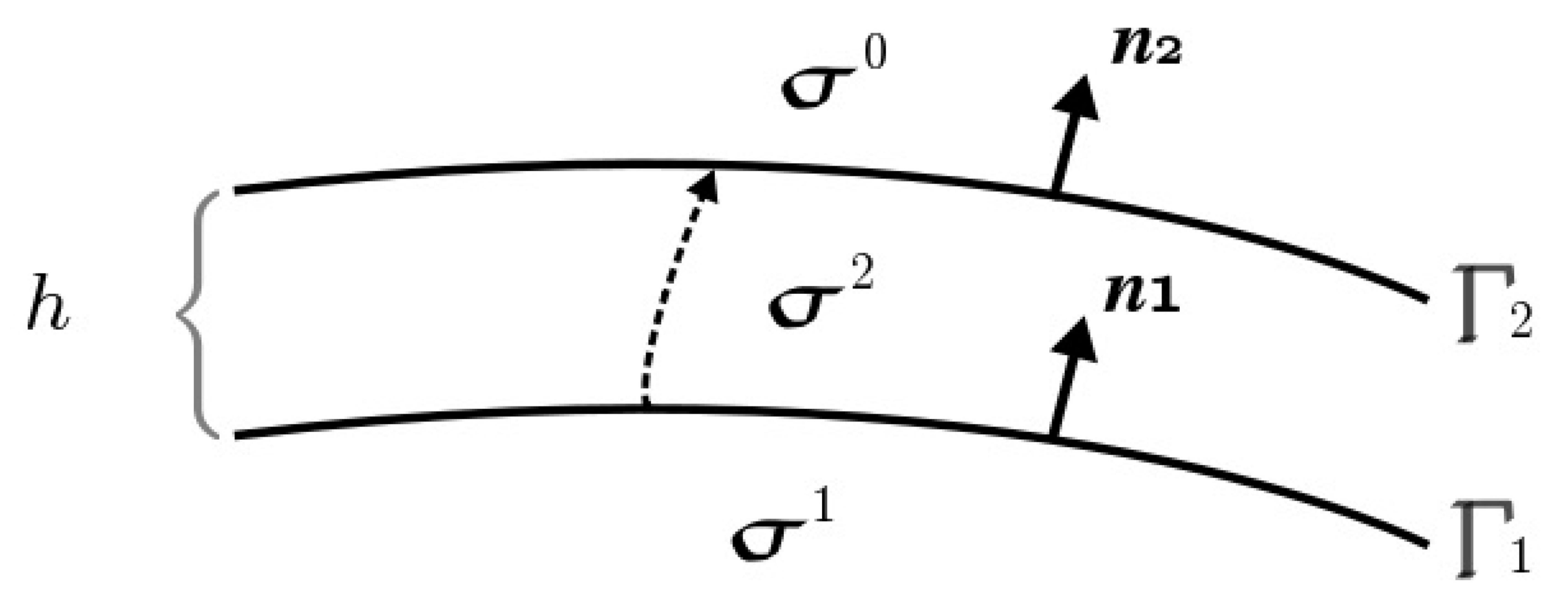}} 
\caption{A single-phase interphase region. In the limit as $\Gamma_1\rightarrow \Gamma_2$, 
the interphase vanishes and is replaced by the interface $\Gamma_2$ which now separates phase-$0$  and phase-$1$.
The figure is schematic in that the thickness $h$ is not assumed to be constant but 
can vary on a length scale large compared to $h$.}
\label{fig:single}
\end{figure}

Consider a composite in which two phases, with conductivities $\BGs^0$ and $\BGs^1$, are separated  by an interphase of 
conductivity $\BGs^2$ and thickness $h$, 
see Figure~\ref{fig:single}. We note that the thickness $h$ is not assumed to be constant but can vary slowly along the interphase region. 
Let $\Gamma_1$ and $\Gamma_2$ denote the interfaces between phase-$1$ and phase-$2$, and phase-$2$ and phase-$0$, respectively. 
We may assume, without loss of generality, that the directions of the normal vectors $\n_1$ and $\n_2$ are given 
according to Figure~\ref{fig:single}. 
We also assume that the interphase geometry is locally laminar, which in particular implies that $\n_1$ approaches $\n_2$, as $h$ approaches
zero.
Then by applying Lemma \ref{lem:1} we find that the change in the effective conductivity due to translating $\Gamma_1$ by a distance $h$ in the direction of $\n_1$ is
\begin{eqnarray}
\label{single_interphase1}
\nonumber
\langle \E\rangle\cdot\delta\BGs^*\langle \E\rangle&\approx&\frac{1}{|\Omega|}\int_{\Gamma_1} h 
\left[\E^{2}\cdot\J^{1}-\E^{1}\cdot\J^{2}\right]\\
\nonumber
&=&\frac{1}{|\Omega|}\int_{\Gamma_1} h \left[
(\BGs^1-\BGs^2) \Et{1}\cdot\Et{1}-\left((\BGs^1)^{-1}-(\BGs^2)^{-1}\right)\Jn{1}\cdot \Jn{1}\right].\\
\end{eqnarray}
\subsection{Two interphases}
\label{sec:double}
Consider a composite in which two phases, with conductivities $\BGs^0$ and $\BGs^1$, are separated  by two  interphases of 
conductivities $\BGs^2$ and $\BGs^3$, and  thickness $h_2 h$ and $h_3 h$, where $h_2+h_3=1$, see Figure~\ref{fig:two}. 
We note that the thicknesses $h$, $h_2 h$ and $h_3 h$ are not assumed to be constant but can vary slowly along the interphase region.
Let 
$\Gamma_1$, $\Gamma_2$, and $\Gamma_3$ denote the interfaces between phase-$1$ and phase-$2$, phase-$2$ and phase-$3$, and 
phase-$3$ and phase-$0$, respectively. We may assume, without loss of generality, that the directions of the normal 
vectors $\n_1$, $\n_2$ , and $\n_3$ are given according to Figure~\ref{fig:two}. We assume further that the 
interfaces $\Gamma_1$ and $\Gamma_2$ are translated simultaneously by distances $h=h_2 h+h_3 h$ and $h_3 h$ in the 
directions $\n_1$ and $\n_2$, respectively. 
We again assume that the interphase geometry is locally laminar, which implies that $\n_1$ and $\n_2$ approach $\n_3$.
Then we have the following result.
\begin{thm}
The change in the effective conductivity $\delta\BGs^*$ due to the change in the composite as described above is  given through the formula
\begin{eqnarray}
\label{double_interphase0}
\nonumber
\langle \E\rangle\cdot\delta\BGs^*\langle \E\rangle&\approx&
\frac{1}{|\Omega|}\int_{\Gamma_1} h \left[
(\BGs^1-(h_2\BGs^2+h_3\BGs^3)) \Et{1}\cdot\Et{1}\right.\\
& & -\left((\BGs^1)^{-1}-\left(h_2 (\BGs^2)^{-1}+ h_3 (\BGs^3)^{-1}\right)\right)\Jn{1}\cdot \Jn{1}].	
\end{eqnarray}
\end{thm}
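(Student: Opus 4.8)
The plan is to view the passage between the two–interphase composite and its reference composite (in which $\Gamma_1,\Gamma_2,\Gamma_3$ collapse onto a single interface separating phase-$0$ and phase-$1$) as the \emph{simultaneous} shift of the two interfaces $\Gamma_1$ and $\Gamma_2$, by distances $h$ and $h_3 h$ in the directions $\n_1$ and $\n_2$, onto $\Gamma_3$, and then to apply the multi-interface shift formula \reff{interface_formula3}. This gives, to first order in $h$, two additive surface terms,
\[
\langle\E\rangle\cdot\delta\BGs^*\langle\E\rangle\approx\frac{1}{|\Omega|}\int_{\Gamma_1}h\left[\E_1^-\cdot\J_1^+-\E_1^+\cdot\J_1^-\right]+\frac{1}{|\Omega|}\int_{\Gamma_2}h_3 h\left[\E_2^-\cdot\J_2^+-\E_2^+\cdot\J_2^-\right].
\]
Since each shift distance is $O(h)$ and \reff{interface_formula3} is itself first-order accurate, the cross-effects of the two shifts on one another contribute only at order $h^2$ and are dropped; the orientations of $\n_1$ and $\n_2$, read from Figure~\ref{fig:two}, fix the $\pm$ labelling, so that $\Gamma_1$ has phase-$1$ and phase-$2$ on its two sides and $\Gamma_2$ has phase-$2$ and phase-$3$.

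Next I would rewrite both surface integrals with Lemma \ref{lem:1}. The $\Gamma_1$ integrand becomes $(\BGs^1-\BGs^2)\Et{1}\cdot\Et{1}-\left((\BGs^1)^{-1}-(\BGs^2)^{-1}\right)\Jn{1}\cdot\Jn{1}$, exactly as in the single-interphase formula \reff{single_interphase1}, while the $\Gamma_2$ integrand becomes $(\BGs^2-\BGs^3)\Et{2}\cdot\Et{2}-\left((\BGs^2)^{-1}-(\BGs^3)^{-1}\right)\Jn{2}\cdot\Jn{2}$, now written in the continuous field quantities $\Et{2}$ and $\Jn{2}$ on $\Gamma_2$.

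The key step is to transfer the $\Gamma_2$ integral onto $\Gamma_1$. Here the hypothesis that the interphase region is thin and locally laminar is doing the real work: $\Gamma_2$ lies within distance $O(h)$ of $\Gamma_1$ and $\n_2$ is within $O(h)$ of $\n_1$, so the natural near-identity correspondence $\Gamma_1\to\Gamma_2$ has Jacobian $1+O(h)$; moreover the tangential part of $\E$ and the normal part of $\J$ are continuous across $\Gamma_1$ and vary by only $O(h)$ through the thin phase-$2$ layer, so $\Et{2}=\Et{1}+O(h)$ and $\Jn{2}=\Jn{1}+O(h)$ at corresponding points. Because the $\Gamma_2$ integrand already carries the factor $h$, each of these substitutions alters the integral by only $O(h^2)$, which is within the claimed first-order accuracy. (Equivalently, one could derive the formula by inserting the phase-$3$ interphase first and the phase-$2$ interphase second, applying \reff{single_interphase1} twice; the two routes agree to first order.)

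It then remains to add the two contributions — now both integrals over $\Gamma_1$ in the fields $\Et{1},\Jn{1}$ — and simplify with $h_2+h_3=1$: the coefficient of $\Et{1}\cdot\Et{1}$ is $h\left[(\BGs^1-\BGs^2)+h_3(\BGs^2-\BGs^3)\right]=h\left[\BGs^1-(h_2\BGs^2+h_3\BGs^3)\right]$, and that of $\Jn{1}\cdot\Jn{1}$ is $-h\left[\left((\BGs^1)^{-1}-(\BGs^2)^{-1}\right)+h_3\left((\BGs^2)^{-1}-(\BGs^3)^{-1}\right)\right]=-h\left[(\BGs^1)^{-1}-\left(h_2(\BGs^2)^{-1}+h_3(\BGs^3)^{-1}\right)\right]$, which is \reff{double_interphase0}. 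I expect the transfer step of the third paragraph to be the main obstacle: it is where the locally-laminar assumption and the first-order expansion of the fields through the thin layer carry the argument, whereas the rest is a direct application of Lemma \ref{lem:1} together with the identity $h_2+h_3=1$.
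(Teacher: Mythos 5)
Your proposal is correct and follows essentially the same route as the paper: apply the simultaneous-shift formula \reff{interface_formula3} to the translations of $\Gamma_1$ by $h$ and $\Gamma_2$ by $h_3h$, rewrite both surface terms via Lemma \ref{lem:1}, collapse the $\Gamma_2$ integral onto $\Gamma_1$ using $\Gamma_2\approx\Gamma_1$, $\Et{2}\approx\Et{1}$, $\Jn{2}\approx\Jn{1}$ as $h\to 0$, and simplify with $h_2+h_3=1$. Your extra remarks on the $O(h^2)$ bookkeeping for the transfer step only make explicit what the paper leaves implicit.
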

\begin{proof}
By using Lemma \ref{lem:1} and (\ref{interface_formula3}) we obtain that 
the resultant change in the effective conductivity is  given by
\begin{eqnarray}
\label{double_interphase1}
\nonumber
\langle \E\rangle\cdot\delta\BGs^*\langle \E\rangle&\approx&\frac{1}{|\Omega|}\int_{\Gamma_1} h 
\left[\E^{2}\cdot\J^{1}-\E^{1}\cdot\J^{2}\right] + 
\frac{1}{|\Omega|}\int_{\Gamma_2} h h_3
\left[\E^{3}\cdot\J^{2}-\E^{2}\cdot\J^{3}\right]\\
\nonumber
&=&\frac{1}{|\Omega|}\int_{\Gamma_1} h\left[ 
(\BGs^1-\BGs^2) \Et{1}\cdot\Et{1}-\left((\BGs^1)^{-1}-(\BGs^2)^{-1}\right)\Jn{1}\cdot \Jn{1}\right]\\
\nonumber
& & +
\frac{1}{|\Omega|}\int_{\Gamma_1} h  h_3
(\BGs^2-\BGs^3) \Et{2}\cdot\Et{2}-\left((\BGs^2)^{-1}-(\BGs^3)^{-1}\right)\Jn{2}\cdot \Jn{2}.\\
\end{eqnarray}
\begin{figure}[t]
\centering
\scalebox{0.08}{\includegraphics{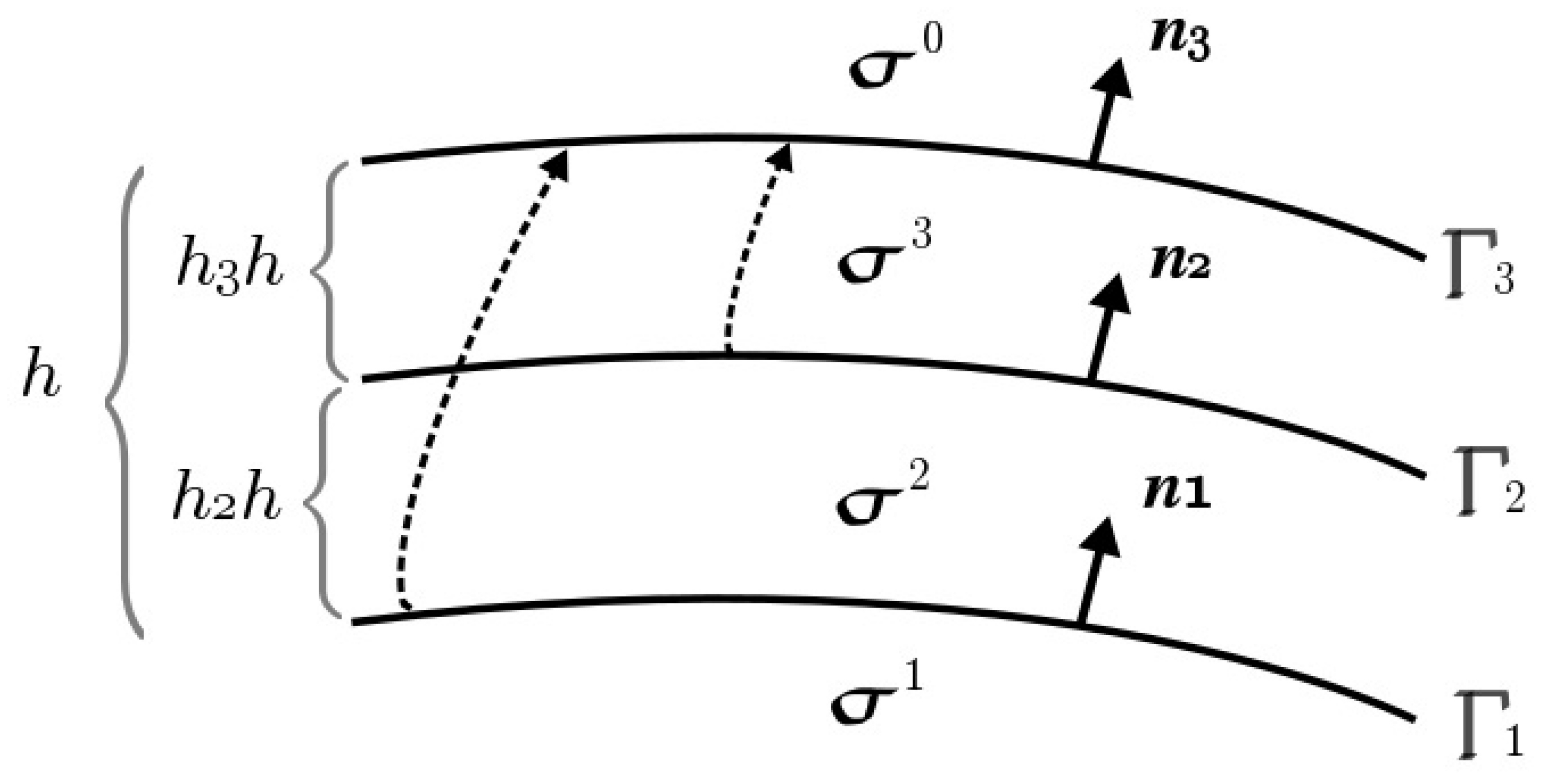}} 
\caption{A two-phase interphase region. In the limit as $\Gamma_1\rightarrow \Gamma_3$ and 
$\Gamma_2\rightarrow \Gamma_3$,  
the interphase region vanishes and is replaced by the interface $\Gamma_3$ which now separates phase-$0$  and phase-$1$.
The figure is schematic in that the thicknesses $h$, $h_2 h$  and $h_3 h$ are not assumed 
to be constant but can vary on a length scale large compared to these thicknesses.}
\label{fig:two}
\end{figure}
By taking  the limit as $h\rightarrow 0$, we have that $\Gamma_2 \approx \Gamma_1$, $\Et{2}\approx\Et{1}$, and $\Jn{2}\approx\Jn{1}$. 
Therefore, we can write (\ref{double_interphase1}) as
\begin{eqnarray}
\label{double_interphase2}
\nonumber
\langle \E\rangle\cdot\delta\BGs^*\langle \E\rangle&\approx&
\frac{1}{|\Omega|}\int_{\Gamma_1} h \left[
(\BGs^1-\BGs^2+h_3\BGs^2-h_3\BGs^3) \Et{1}\cdot\Et{1}\right.\\
\nonumber
& & - \left((\BGs^1)^{-1}-(\BGs^2)^{-1}+h_3 (\BGs^2)^{-1}-h_3 (\BGs^3)^{-1}\right)\Jn{1}\cdot \Jn{1}].\\
\end{eqnarray}
From (\ref{double_interphase2}) and by using the fact that $1-h_3=h_2$, equation (\ref{double_interphase0}) follows.
\end{proof}

\subsection{Multiple interphases}
\label{sec:multiple}
\begin{figure}[t]
\centering
\scalebox{0.10}{\includegraphics{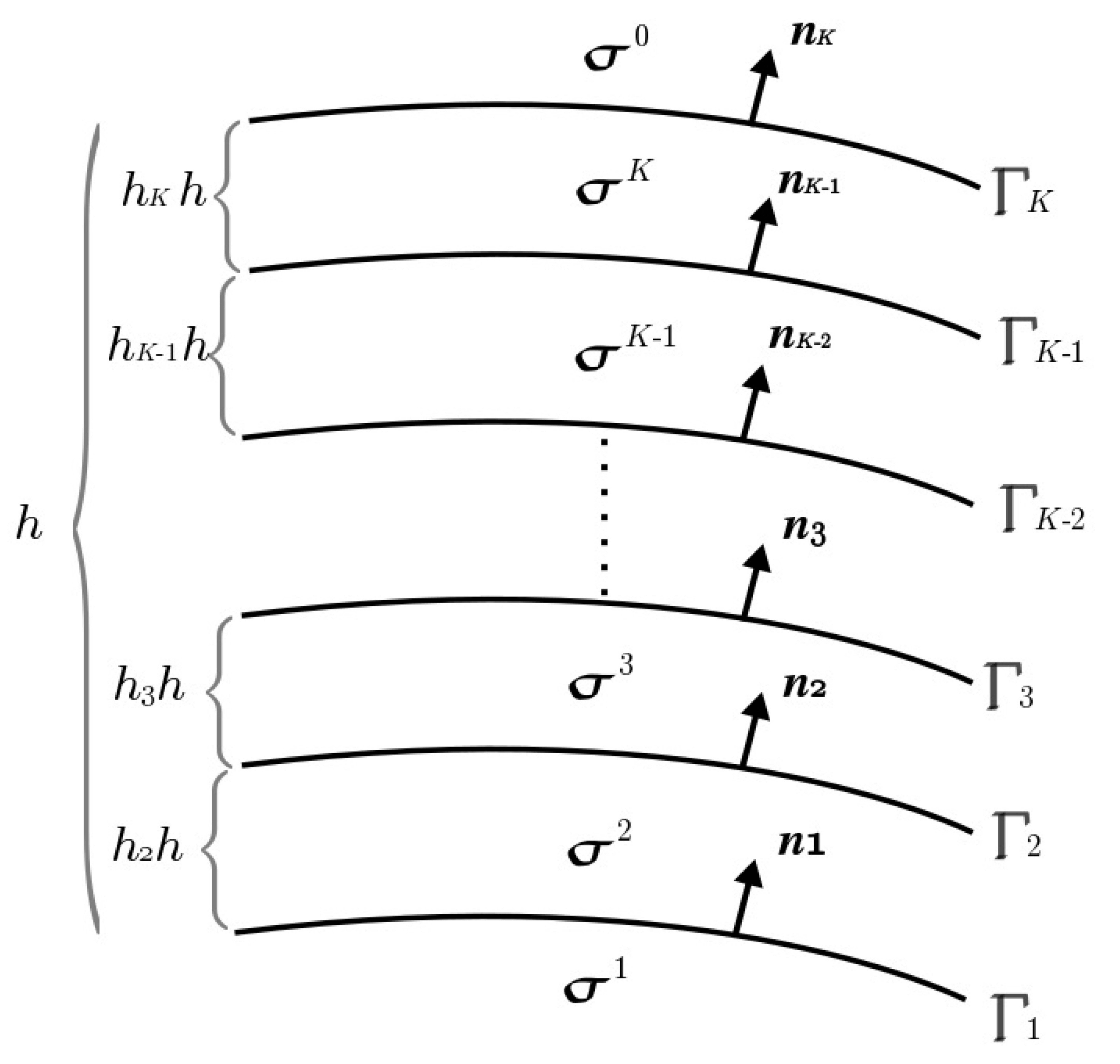}} 
\caption{A multi-phase interphase region. In the limit as $\Gamma_1\rightarrow \Gamma_K,$ 
 $\Gamma_2\rightarrow \Gamma_K,\ldots, \Gamma_{K-1}\rightarrow \Gamma_K$, 
the interphase region vanishes and is replaced by the interface $\Gamma_K$ which now separates phase-$0$  and phase-$1$.
The figure is schematic in that the thicknesses $h$, $h_2 h, h_3 h,\ldots,h_K h$ are not 
assumed to be constant but can vary on a length scale large compared to these thicknesses.}
\label{fig:multi}
\end{figure}
Consider a composite in which two phases, with conductivities $\BGs^0$ and $\BGs^1$, are separated  by $K$ interphases of conductivities 
$\BGs^2,\BGs^3,\ldots,\BGs^K$, and  thickness $h_2 h, h_3 h,\ldots,h_K$, 
where $\displaystyle\sum_{k=2}^{K}h_k=1$, see Figure~\ref{fig:multi}. 
We note that the thicknesses $h, h_2 h, h_3 h, \ldots, h_K h$ are not assumed to be constant but can vary slowly along the interphase region.
Let $\Gamma_1,\Gamma_2, \ldots, \Gamma_K$ denote 
the interfaces between phase-$1$ and phase-$2$, phase-$2$ and phase-$3$, $\dots$, and phase-$K$ and phase-$0$, respectively. We 
may assume, without loss of generality, that the directions of the normal vectors $\n_1,\n_2,\ldots,\n_K$ are given according 
to Figure~\ref{fig:multi}. We assume further that the interfaces $\Gamma_1,\ldots,\Gamma_{K-1}$ are translated simultaneously 
such that each $\Gamma_k$, $k=1,\ldots,K-1$, is translated by a distance $\displaystyle h\sum_{i=k+1}^{K}h_i$ in the 
direction $\n_k$ normal to the interface $\Gamma_k$. Then we have the following result.
\begin{thm}
The change in the effective conductivity $\delta\BGs^*$ is  given through the formula
\begin{eqnarray}
\label{graded_interphase1}
\nonumber
\langle \E\rangle\cdot\delta\BGs^*\langle \E\rangle&\approx&
\frac{1}{|\Omega|}\int_{\Gamma_1} h 
\left(\BGs^1-\sum_{k=2}^{K}h_k \BGs^k\right) \Et{1}\cdot\Et{1}\\
&& -h\left((\BGs^1)^{-1}-\sum_{k=2}^{K} h_k (\BGs^k)^{-1}\right)\Jn{1}\cdot \Jn{1}.
\end{eqnarray}
\end{thm}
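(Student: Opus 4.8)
The plan is to follow the same route as the two-interphase case: apply Lemma~\ref{lem:1} through the multi-interface shift formula \reff{interface_formula3} to all $K-1$ moving interfaces simultaneously, collapse everything onto $\Gamma_1$ in the limit $h\to 0$, and then reduce the resulting weighted sum of conductivities by a discrete summation by parts.

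First I would write down the contribution of each moving interface. For $k=1,\ldots,K-1$ the interface $\Gamma_k$ has phase-$k$ on its $+$ side and phase-$(k+1)$ on its $-$ side, and is shifted by $t_k=h\sum_{i=k+1}^{K}h_i$ along $\n_k$. Inserting $\BGs^+=\BGs^k$, $\BGs^-=\BGs^{k+1}$ into the equivalent form \reff{interface_formula2_equiv2} of Lemma~\ref{lem:1} and summing over $k$ according to \reff{interface_formula3} writes $\langle\E\rangle\cdot\delta\BGs^*\langle\E\rangle$ as a sum of $K-1$ surface integrals over $\Gamma_1,\ldots,\Gamma_{K-1}$, the $k$-th having integrand $h\big(\sum_{i=k+1}^{K}h_i\big)\big[(\BGs^k-\BGs^{k+1})\Et{k}\cdot\Et{k}-((\BGs^k)^{-1}-(\BGs^{k+1})^{-1})\Jn{k}\cdot\Jn{k}\big]$.

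Next I would pass to the limit $h\to 0$. By the locally laminar hypothesis every $\Gamma_k$ collapses onto $\Gamma_1$ and, exactly as in the two-interphase argument where $\Et{2}\approx\Et{1}$ and $\Jn{2}\approx\Jn{1}$, the tangential electric fields and normal current fields on all the interfaces converge to $\Et{1}$ and $\Jn{1}$. Hence all the integrals may be written over $\Gamma_1$ with these common fields, and it only remains to evaluate the scalar coefficients $\sum_{k=1}^{K-1}\big(\sum_{i=k+1}^{K}h_i\big)(\BGs^k-\BGs^{k+1})$ and its analogue with each $\BGs^k$ replaced by $(\BGs^k)^{-1}$. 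Setting $S_k=\sum_{i=k+1}^{K}h_i$, so that $S_1=1$ (using $\sum_{i=2}^{K}h_i=1$), $S_{K-1}=h_K$ and $S_k-S_{k-1}=-h_k$, a discrete Abel summation gives
\[
\sum_{k=1}^{K-1}S_k\big(\BGs^k-\BGs^{k+1}\big)=S_1\BGs^1+\sum_{k=2}^{K-1}(S_k-S_{k-1})\BGs^k-S_{K-1}\BGs^K=\BGs^1-\sum_{k=2}^{K}h_k\BGs^k,
\]
and the identical manipulation on the matrices $(\BGs^k)^{-1}$ yields $(\BGs^1)^{-1}-\sum_{k=2}^{K}h_k(\BGs^k)^{-1}$. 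Substituting these back produces \reff{graded_interphase1}.

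The Abel summation is routine; the step that needs care is the limit $h\to 0$ — one must check that applying the first-order shift formula simultaneously to the $K-1$ interfaces and then making the replacements $\Gamma_k\approx\Gamma_1$, $\Et{k}\approx\Et{1}$, $\Jn{k}\approx\Jn{1}$ introduces only errors of order $h^2$, so that \reff{graded_interphase1} is genuinely exact to first order in $h$; this is the same point already implicit in the two-interphase proof, now iterated $K-1$ times. An alternative that avoids treating all interfaces at once is induction on $K$: peel off one interphase, say $\BGs^K$, with the single-interphase formula \reff{single_interphase1}, apply the inductive hypothesis to the remaining $K-1$ phases after rescaling the weights to sum to one, and reconcile the two expressions using the same telescoping identity, so that the only genuinely new computation is the one already carried out for two interphases.
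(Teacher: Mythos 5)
Your proposal follows essentially the same route as the paper's own proof: apply Lemma~\ref{lem:1} through the simultaneous-shift formula \reff{interface_formula3}, collapse all interfaces onto $\Gamma_1$ with $\Et{k}\approx\Et{1}$, $\Jn{k}\approx\Jn{1}$ as $h\to 0$, and telescope the weighted sums of conductivities. Your Abel-summation bookkeeping (with shift distances $h\sum_{i=k+1}^{K}h_i$ and the identity $\sum_{k=1}^{K-1}S_k(\BGs^k-\BGs^{k+1})=\BGs^1-\sum_{k=2}^{K}h_k\BGs^k$) is correct and in fact cleaner than the paper's displayed intermediate formulas, whose summation limits contain typographical slips.
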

\begin{proof}
From (\ref{interface_formula3}) we see that the resultant change in the energy is
\begin{eqnarray}
\label{graded_interphase3}
\nonumber
\langle \E\rangle\cdot\delta\BGs^*\langle \E\rangle&\approx&
\sum_{k=1}^{K-1}\frac{1}{|\Omega|}\int_{\Gamma_k} h \left(\sum_{i=k+1}^{K-1}h_{i}\right)
\left[\E^{k+1}\cdot\J^{k}-\E^{k}\cdot\J^{k+1}\right].\\
\end{eqnarray}
Then, by using Lemma \ref{lem:1}, (\ref{graded_interphase3}) can be rewritten as
\begin{eqnarray}
\label{graded_interphase2}
\nonumber
\langle \E\rangle\cdot\delta\BGs^*\langle \E\rangle&\approx&
\sum_{k=1}^{K-1}\frac{1}{|\Omega|}\int_{\Gamma_k} h [ \left(\sum_{i=k+1}^{K-1}h_{i}\right)
(\BGs^k-\BGs^{k+1}) \Et{k}\cdot\Et{k}\\
& & -\left((\BGs^k)^{-1}-(\BGs^{k+1})^{-1}\right)\Jn{k}\cdot \Jn{k}].
\end{eqnarray}
In the limit as $h\rightarrow 0$, we have that  $\Gamma_k \approx \Gamma_1$, $\Et{k}\approx\Et{1}$, and $\Jn{k}\approx\Jn{1}$, for $k=2,3,\ldots, K-1$. 
Therefore, we can rewrite (\ref{graded_interphase2}) as
\begin{eqnarray}
\label{graded_interphase4}
\nonumber
\langle \E\rangle\cdot\delta\BGs^*\langle \E\rangle&\approx&
\frac{1}{|\Omega|}\int_{\Gamma_1}h [ \sum_{k=1}^{K-1} \left(\sum_{i=k+1}^{K-1}h_{i}\right)
(\BGs^k-\BGs^{k+1}) \Et{1}\cdot\Et{1}\\
\nonumber
& & -h \sum_{k=1}^{K-1} \left(\sum_{i=k+1}^{K-1}h_{i}\right)\left((\BGs^k)^{-1}-(\BGs^{k+1})^{-1}\right)\Jn{1}\cdot \Jn{1}].\\
\end{eqnarray}
By expanding the sums and using the fact that $\displaystyle\sum_{k=2}^{K}h_k=1$, we find that
\begin{eqnarray}
\label{graded_sum1}
\sum_{k=1}^{K-1} \sum_{i=k+1}^{K-1}h_{i}
(\BGs^k-\BGs^{k+1})&=& \BGs^1 - \sum_{k=2}^{K-1}h_k\BGs^k,\\
\nonumber
\\
\label{graded_sum2}
\sum_{k=1}^{K-1} \sum_{i=k+1}^{K-1}h_{i}
\left((\BGs^k)^{-1}-(\BGs^{k+1})^{-1}\right) &=&
(\BGs^1)^{-1} - \sum_{k=2}^{K-1} h_k (\BGs^k)^{-1}.
\end{eqnarray}
The result follows from substituting (\ref{graded_sum1}) and (\ref{graded_sum2}) in (\ref{graded_interphase4}).

\end{proof}

\subsection{Graded interface}
\label{sec:graded}
By taking the limit of (\ref{graded_interphase1}) as $K\rightarrow\infty$, one obtains the following result for an interface with 
graded conductivity
\begin{eqnarray}
\label{graded_interface1}
\nonumber
\langle \E\rangle\cdot\delta\BGs^*\langle \E\rangle&\approx&
\frac{1}{|\Omega|}\int_{\Gamma_1} \left[h 
\left(\BGs^1-\frac{1}{h}\int_{0}^{h}\BGs(z)\;dz\right) \Et{1}\cdot\Et{1}\right.\\
&&\left. -h\left((\BGs^1)^{-1}-\frac{1}{h}\int_{0}^{h} (\BGs)^{-1}(z)\; dz\right)\Jn{1}\cdot \Jn{1}\right].
\end{eqnarray}
It is worth noting that this formula applies even if $\BGs$ has a finite jump at $z=0$ or $z=h$.

\begin{figure}[t]
\begin{center}
\scalebox{0.065}{\includegraphics{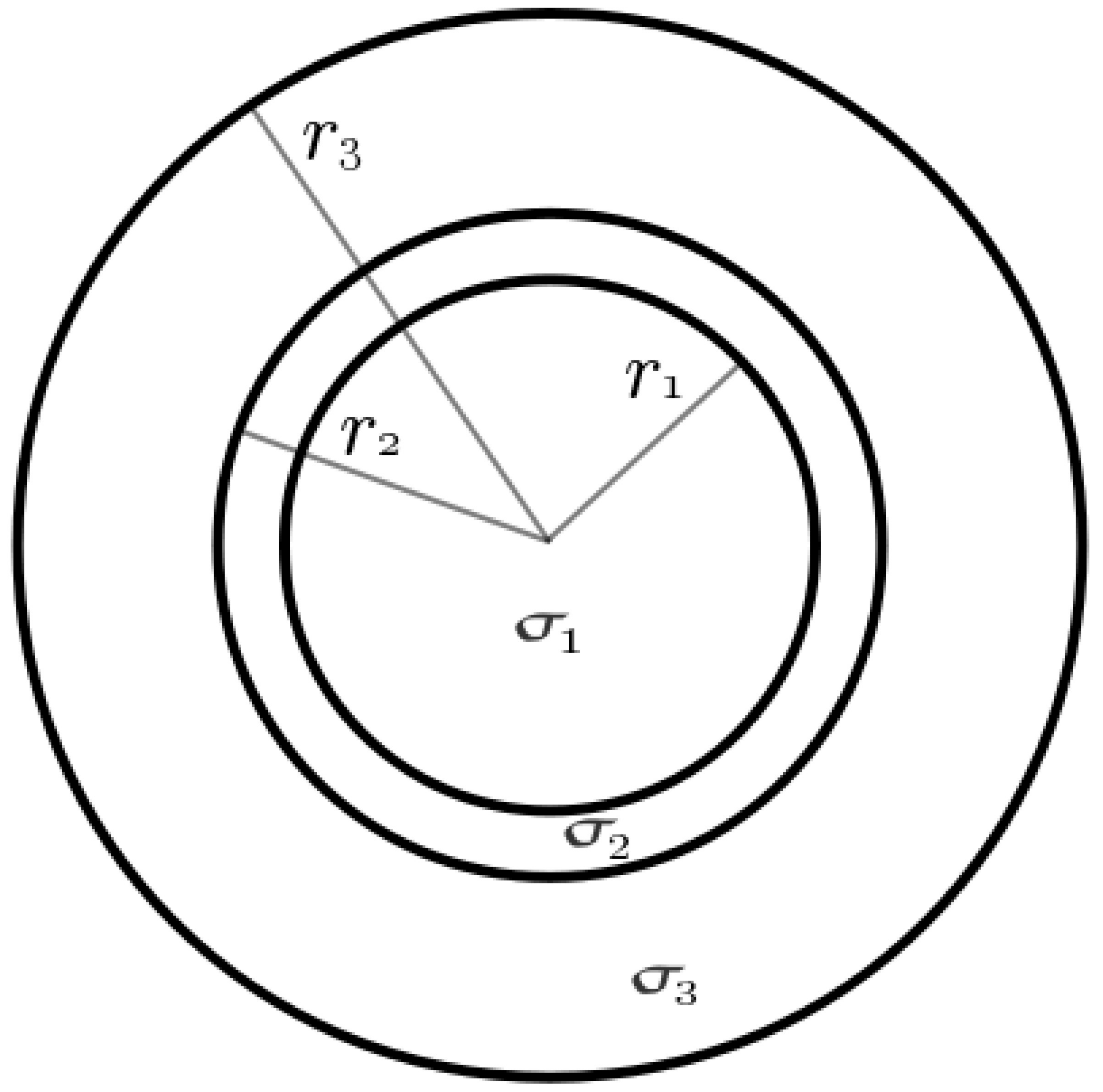}} 
\end{center}
\caption{A doubly coated sphere from the doubly coated sphere assemblage.}
\label{fig:coat_1}
\end{figure}

\section{Numerical Results}
\label{sec:numerical}
In this section, we consider a specific example of a composite with an interphase; namely the doubly coated sphere assemblage 
for a three-phase material, see for instance \cite{Milton:2002:TOC}. The reference composite is the well-known singly
coated sphere assemblage of Hashin and Shtrikman \cite{Hashin:1962:EMH, Hashin:1962:VAT}. 
The goal is to provide a numerical comparison between our method 
for approximating the effective 
conductivity and the exact effective conductivity of the doubly coated sphere assemblage. The numerical comparison also includes two other well-known
approximations for the effective conductivity for the cases of highly conducting interphase and poorly conducting interphase.

For convenience, we assume that the phases of the doubly coated sphere assemblage have isotropic conductivities and hence, for this
composite, the effective conductivity is also isotropic. Therefore, in this section, we represent conductivities by scalar quantities
and denote by $\sigma_*$, $\delta \sigma_*$, $\sigma_*^0$, and $\sigma_i$, for $i=1,2,3$, 
the effective conductivity, 
the change in the effective conductivity, the effective conductivity of the reference composite, 
and the conductivity of the $i$-th phase, respectively.

\begin{figure}[t]
\begin{center}

\hspace*{-1.3cm}\scalebox{0.267311}{\includegraphics{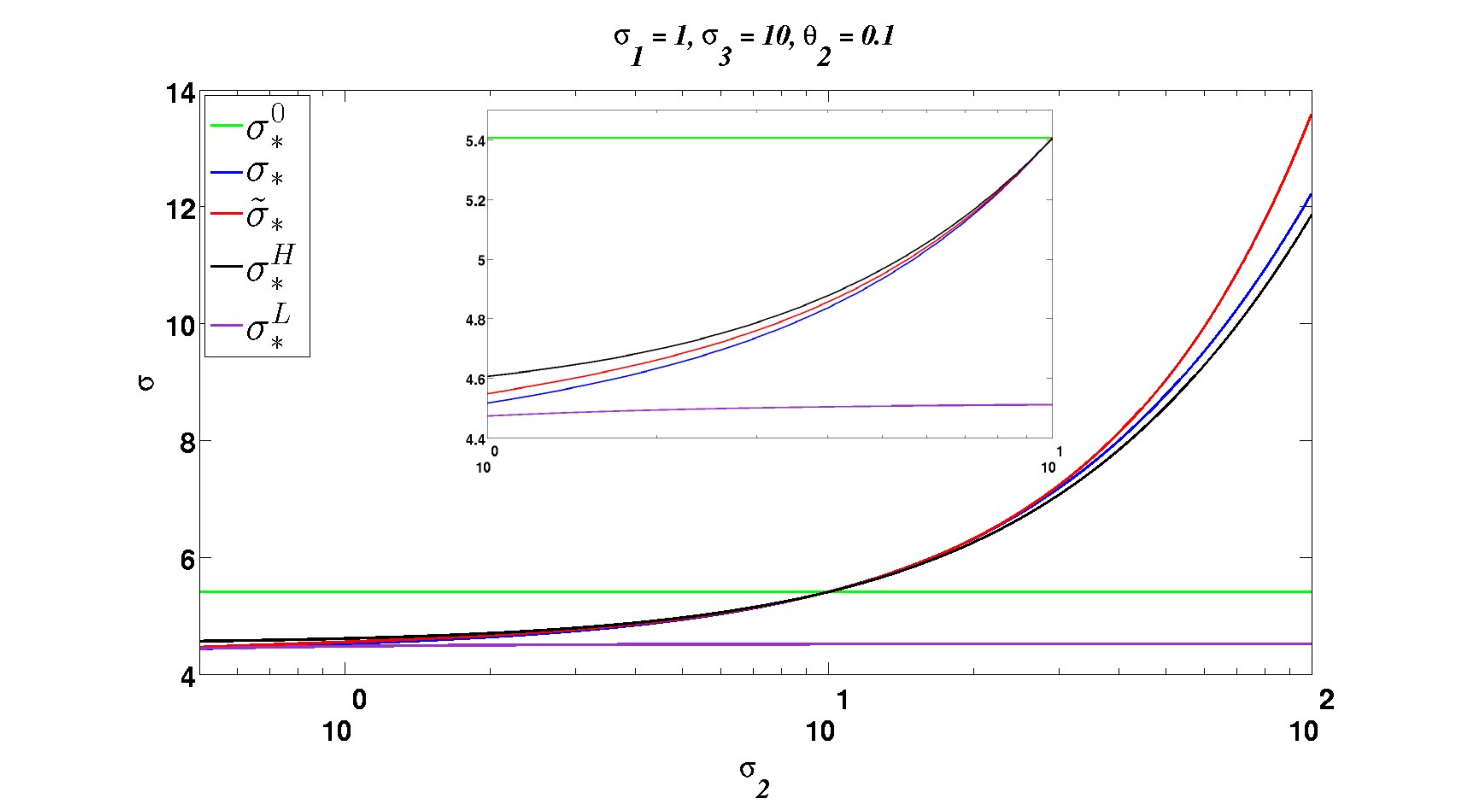}}
\end{center}
\caption{Log plot of the effective conductivity for the doubly coated sphere assemblage as a function of the interphase
conductivity $\sigma_2$. The interphase volume fraction is $\theta_2=0.1$. The core volume fraction is $ \theta_1=3^3/4^3\approx 0.422$, which
is the same in both the original and reference configurations.
The outer coating and the core have conductivities 
$\sigma_1=1,\sigma_3=10$, respectively. The interphase conductivity values are in the range $\sigma_2\in [10^{-2},10^{2}]$ 
with a zoomed-in view in the middle 
for conductivity values  $\sigma_2\in [1,10]$.}
\label{fig:num_1}
\end{figure}

\subsection{The doubly coated sphere assemblage}
\label{subsec:doubly}
Consider an arbitrary doubly coated sphere in the doubly coated sphere assemblage in which the three concentric spheres have radii
given by $r_1,r_2,r_3$ with $r_1<r_2<r_3$, see Figure \ref{fig:coat_1}. 
Assume that the core is occupied by a material with 
conductivity $\sigma_1 $, 
the interphase is occupied by a second material with isotropic conductivity $\sigma_2 $, and the outer-most coating 
is occupied by a third material with isotropic conductivity $\sigma_3 $. 
With this notation, the effective conductivity of the doubly 
coated sphere assemblage can be computed explicitly 
(see for example, \cite{Schulgasser:1977:CET} and Section 7.2 of \cite{Milton:2002:TOC}) and is given by
\begin{eqnarray}
\label{exact_effective}
 \sigma_*= \sigma_3+ \frac{3 \sigma_3 (1-\theta_3)}{\theta_3-\displaystyle\frac{3 \sigma_3}{\sigma_3-\sigma_2-\frac{\displaystyle 3 \sigma_2 \theta_1}{
\displaystyle\theta_2-
\frac{3 \sigma_2 (1-\theta_3)}{\sigma_2-\sigma_1}}}},
\end{eqnarray}
where 
\begin{eqnarray}
\label{thetas}
\nonumber
\theta_1 &=& \left(\frac{r_1}{r_3}\right)^3,\\
\theta_2 &=& \left(\frac{r_2}{r_3}\right)^3-\left(\frac{r_1}{r_3}\right)^3,\\
\nonumber
\theta_3 &=& 1-\left(\frac{r_2}{r_3}\right)^3,
\end{eqnarray}
are the volume fractions of phase-$1$, phase-$2$, and phase-$3$, respectively.

It is important to note that the thickness of the interphase, phase-$2$, varies from one doubly coated sphere to the next doubly coated sphere, 
in proportion to the relative sizes of the doubly coated spheres, so that the doubly coated spheres are rescaled copies of each other.

\begin{figure}[t]
\begin{center}

\hspace*{-1.3cm}\scalebox{0.267311}{\includegraphics{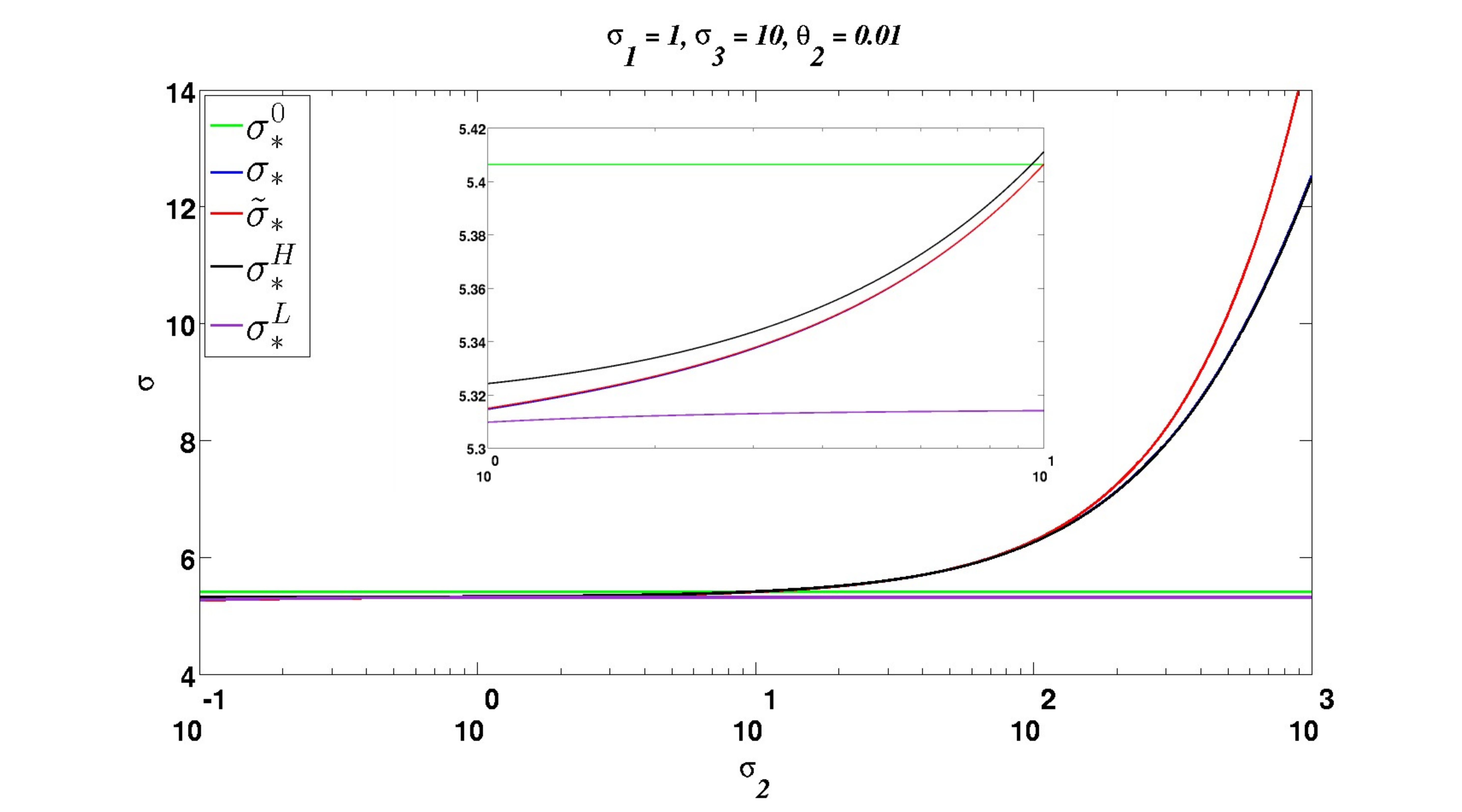}}
\end{center}
\caption{Log plot of the effective conductivity for the doubly coated sphere assemblage as a function of the interphase
conductivity $\sigma_2$. The interphase volume fraction is $\theta_2=0.01$.
The core volume fraction is $ \theta_1=3^3/4^3\approx 0.422$, which
is the same in both the original and reference configurations.
The outer coating and the core have conductivities 
$\sigma_1=1,\sigma_3=10$, respectively. The interphase conductivity values are in the range $\sigma_2\in [10^{-2},10^{3}]$ 
with a zoomed-in view in the middle 
for conductivity values  $\sigma_2\in [1,10]$.}
\label{fig:num_2}
\end{figure}

\begin{figure}[t]
\begin{center}

\hspace*{-1.3cm}\scalebox{0.267311}{\includegraphics{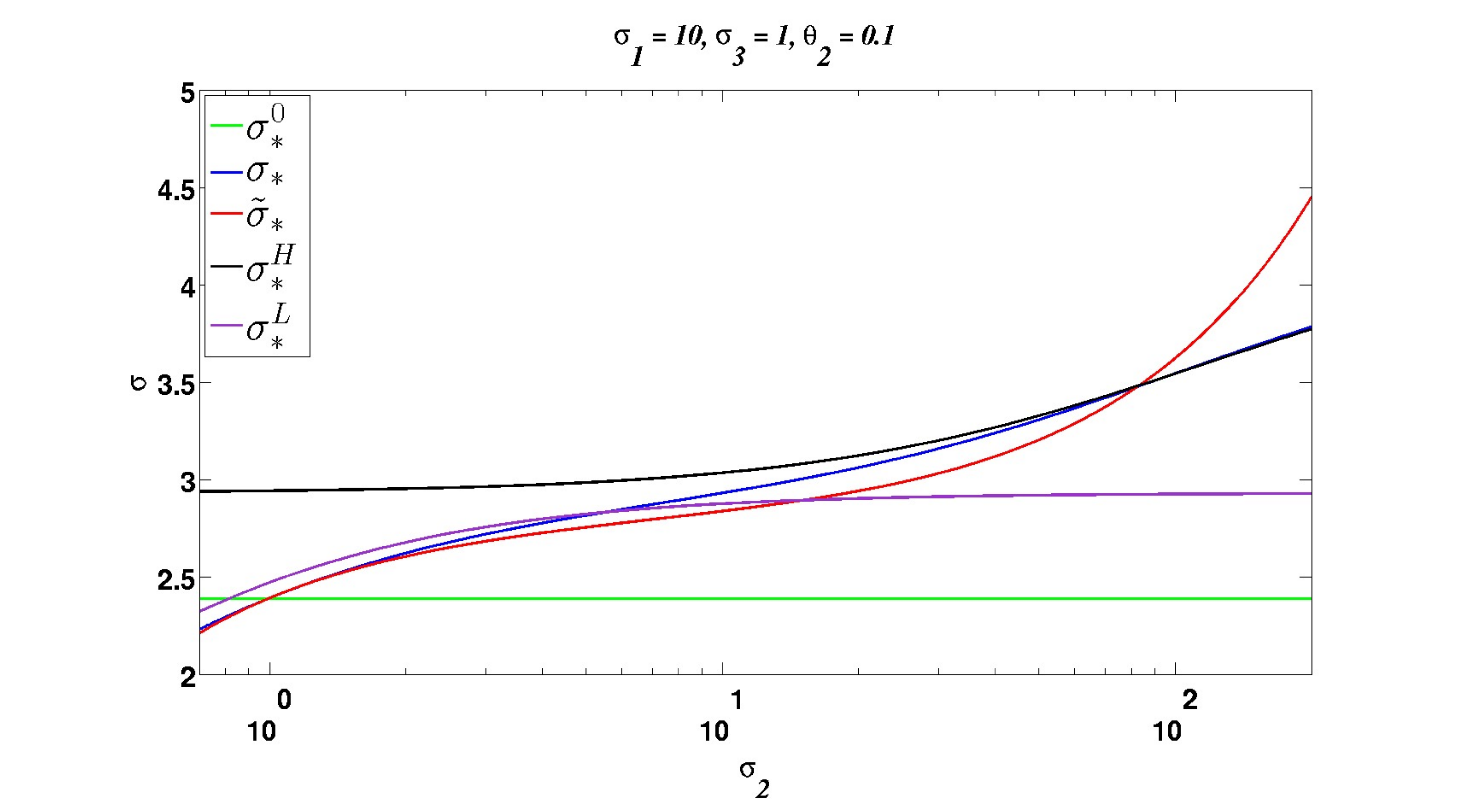}}
\end{center}
\caption{Log plot of the effective conductivity for the doubly coated sphere assemblage as a function of the interphase
conductivity $\sigma_2$. The interphase volume fraction is $\theta_2=0.1$.
The core volume fraction is $ \theta_1=3^3/4^3\approx 0.422$, which
is the same in both the original and reference configurations.
The outer coating and the core have conductivities 
$\sigma_1=10,\sigma_3=1$, respectively.}
\label{fig:num_3}
\end{figure}
\subsection{The approximate effective conductivity $\widetilde{\sigma}_*$}
In the doubly coated sphere assemblage described in the previous section, let $r_1$ be fixed and $r_2=(1+t)r_1$ for some $t>0$.
Then the thickness of the interphase is $h=r_2-r_1=t r_1$. By letting $t\rightarrow 0$, the thickness of the interphase 
$h\rightarrow 0$, and hence we can compute the change in the effective conductivity $\delta \sigma_*$ using (\ref{single_interphase1}).
However, in this case it is more convenient to compute $\delta \sigma_*$ from noticing that 
$\displaystyle\lim_{ h\rightarrow 0}\frac{\delta \sigma_*}{ h}=\left.\frac{d \sigma_*}{dh}\right|_{h=0}$ 
and that $\sigma_*$ is given explicitly by (\ref{exact_effective}). Thus, by using the fact that $h=r_2-r_1$, substituting  
(\ref{thetas}) in (\ref{exact_effective}), and then differentiating with respect to $h$, one obtains
\begin{eqnarray}
\label{delta_sigma}
\nonumber
 \delta \sigma_*(h) &\approx& h \left(\left.\frac{d \sigma_*}{dh}\right|_{h=0}\right)\\
&=& h \;\frac{-9 \sigma_3 \theta_1(\sigma_3-\sigma_2)((\sigma_1)^2+2 \sigma_2 \sigma_3)}{r_1 \sigma_2 ((\sigma_3-\sigma_1) \theta_1+\sigma_1+2 \sigma_3)^2}.
\end{eqnarray}
On the other hand, the effective conductivity of the reference configuration (the unperturbed problem) $\sigma_*^0$ can be calculated through
evaluating $\sigma_*$ at $h=0$, which after simplification gives 
\begin{eqnarray}
\label{sigma_unperturbed}
\nonumber
 \sigma_*^0&=&\sigma_*(0)\\
&=&\sigma_3+ \frac{3 \sigma_3 \theta_1 }{1-\theta_1-\displaystyle\frac{3 \sigma_3}{\sigma_3-\sigma_1}}.
\end{eqnarray}
By substituting (\ref{delta_sigma}) and (\ref{sigma_unperturbed}) in the scalar version of (\ref{segma_effctive_appx}), 
one obtains the following formula
for our approximation of the effective conductivity for this composite 
\begin{eqnarray}
\label{sigma_appx}
\nonumber
 \sigma_*(h)&\approx&\widetilde{\sigma}_*(h)\\
\nonumber
&=& \sigma_3+ \frac{3 \sigma_3 \theta_1 }{1-\theta_1-\displaystyle\frac{3 \sigma_3}{\sigma_3-\sigma_1}}
+h \;\frac{-9 \sigma_3 \theta_1(\sigma_3-\sigma_2)((\sigma_1)^2+2 \sigma_2 \sigma_3)}{r_1 \sigma_2 ((\sigma_3-\sigma_1) \theta_1+\sigma_1+2 \sigma_3)^2}.\\
\end{eqnarray}

\begin{figure}[t]
\begin{center}

\hspace*{-1.3cm}\scalebox{0.267311}{\includegraphics{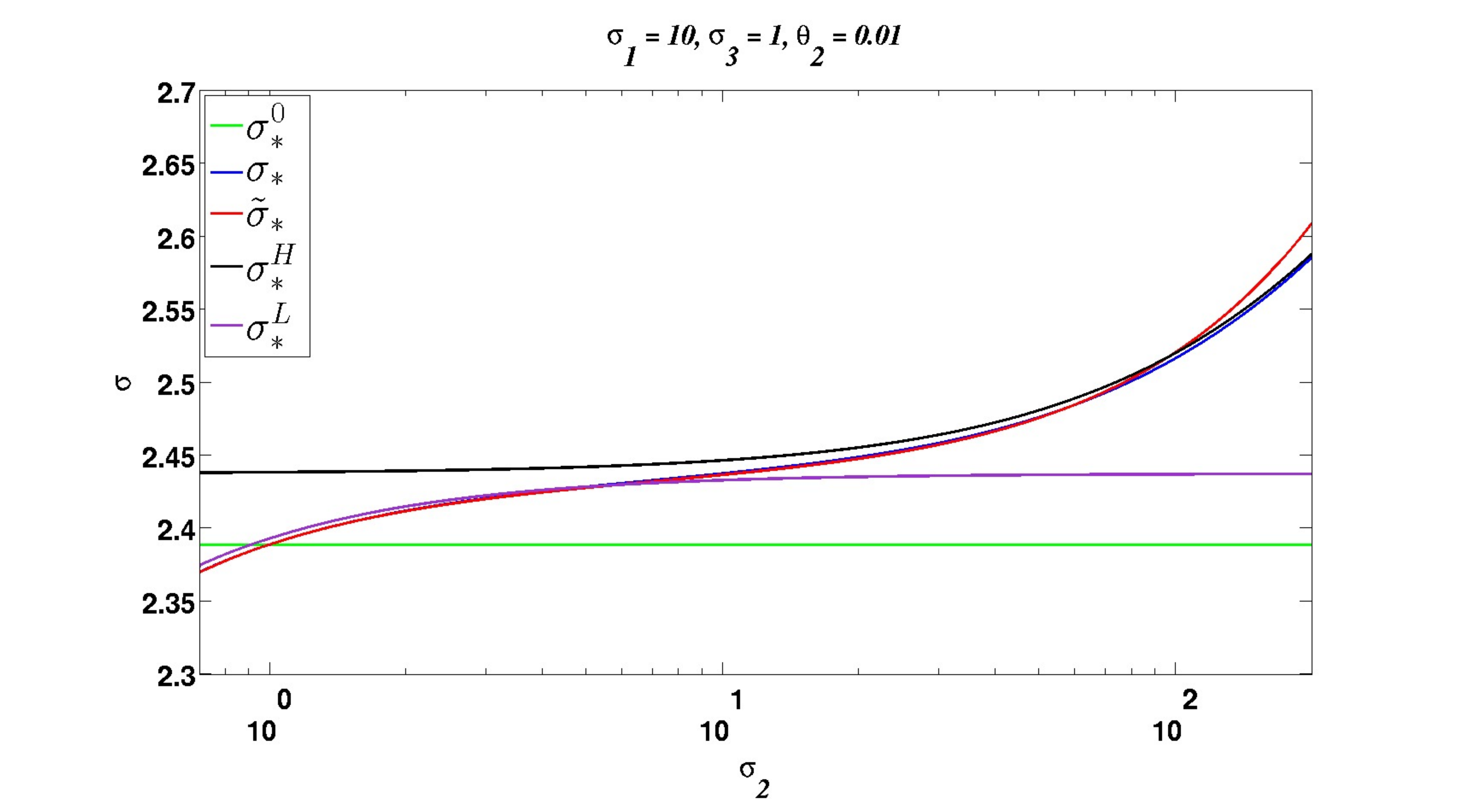}}

\end{center}
\caption{Log plot of the effective conductivity for the doubly coated sphere assemblage as a function of the interphase
conductivity $\sigma_2$. The interphase volume fraction is $\theta_2=0.01$.
The core volume fraction is $ \theta_1=3^3/4^3\approx 0.422$, which
is the same in both the original and reference configurations.
The outer coating and the core have conductivities 
$\sigma_1=10,\sigma_3=1$, respectively.}
\label{fig:num_4}
\end{figure}
\subsection{Approximations of the effective conductivity for high-contrast cases}
We consider the doubly coated sphere assemblage described in Section \ref{subsec:doubly} for the cases in which the interphase is thin
and is either highly conducting or poorly conducting. The high-conduction case corresponds to assuming that 
$\theta_2\rightarrow 0$ and $\sigma_2\rightarrow \infty$ in such a way that the product $\theta_2\sigma_2$ remains constant.
By taking these limits in (\ref{exact_effective}), we obtain that the effective 
conductivity in this case is given by
\begin{eqnarray}
\label{sigma_H}
 \sigma_*^H =\sigma_3+\frac{3 (1-\theta_3)\sigma_3}{\theta_3 -\displaystyle\frac{3 \sigma_3}{\sigma_3 - 
\sigma_1-\displaystyle\frac{2 \theta_2 \sigma_2}{3(1-\theta_3)}}}.
\end{eqnarray}
While the poor-conduction case corresponds to assuming that 
$\theta_2\rightarrow 0$ and $\sigma_2\rightarrow 0$ in such a way that the ratio $\sigma_2/\theta_2$ remains constant.
By taking these limits in (\ref{exact_effective}), we find that the effective 
conductivity in this case is given by
\begin{eqnarray}
\label{sigma_L}
 \sigma_*^L =\sigma_3+\frac{3 (1-\theta_3)\sigma_3}{\theta_3 -\displaystyle\frac{3 \sigma_3}{\sigma_3 - 
\displaystyle\frac{3}{\displaystyle\frac{3}{\sigma_1}+\displaystyle\frac{\theta_2}{(1-\theta_3)\sigma_2}}}}.
\end{eqnarray}
\subsection{Numerical comparison of the results}
A numerical comparison given by log plots for the effective conductivity of the doubly coated sphere assemblage as a function of the interphase
conductivity $\sigma_2$ is provided in Figures~\ref{fig:num_1}-\ref{fig:num_6}. The five curves displayed in each figure correspond to
the exact effective conductivity $\sigma_*$, given by (\ref{exact_effective}), our approximation $\widetilde{\sigma}^*$, given by
(\ref{sigma_appx}), the highly conducting interphase approximation $\sigma_H$, given by (\ref{sigma_H}), 
the poorly conducting interphase approximation $\sigma_L$, given by (\ref{sigma_L}), and the effective conductivity of the 
reference composite (the zero order approximation) $\sigma_*^0$, given by (\ref{sigma_unperturbed}). For a representative 
doubly coated sphere we have taken the outer radius $r_3=4$ length units and the inner radius $r_1=3$ length units and thus,
by using (\ref{thetas}),
the interphase thickness $h=r_2-r_1$ can be computed from the interphase volume fraction $\theta_2$, which we use as a parameter 
in generating Figures~\ref{fig:num_1}-\ref{fig:num_6}. For example, in Figure~\ref{fig:num_1}, a value of $\theta_2=0.1$ corresponds
to $h= 0.2204$, and a value of $\theta_2=0.01$, as in Figure~\ref{fig:num_2}, corresponds to $h= 0.0235$.

Figures~\ref{fig:num_1}-\ref{fig:num_6} show that there is a good agreement between the approximate effective conductivity
$\widetilde{\sigma}_*$ and the exact effective conductivity $\sigma_*$ for the doubly coated sphere assemblage. 
This agreement   is most noticeable when $\theta_2$, and consequently $h$, is small, as in Figures \ref{fig:num_2} and \ref{fig:num_4}, and
for values of $\sigma_2$ within or near the range $[\min \{\sigma_1,\sigma_3\}, \max \{\sigma_1,\sigma_3\}]$. This confirms that
$\widetilde{\sigma}_*$ well approximates $\sigma_*$ when the interphase is thin and has intermediate conductivity values.

\begin{figure}[t]
\begin{center}

\hspace*{-1.3cm}\scalebox{0.267311}{\includegraphics{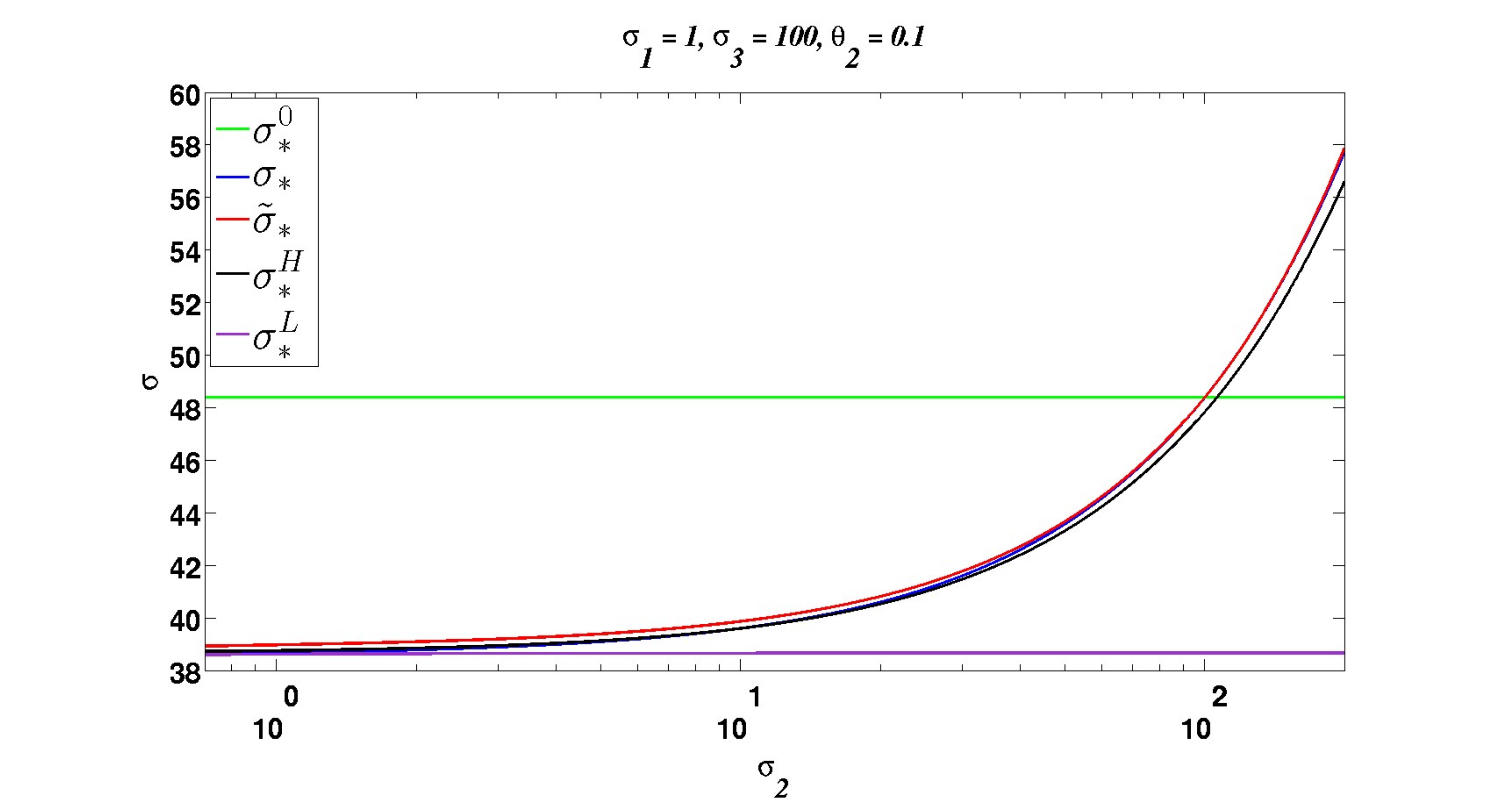}}
\end{center}
\caption{Log plot of the effective conductivity for the doubly coated sphere assemblage as a function of the interphase
conductivity $\sigma_2$. The interphase volume fraction is $\theta_2=0.1$.
The core volume fraction is $ \theta_1=3^3/4^3\approx 0.422$, which
is the same in both the original and reference configurations.
The outer coating and the core have conductivities 
$\sigma_1=1,\sigma_3=100$, respectively.}
\label{fig:num_5}
\end{figure}

\begin{figure}[t]
\begin{center}

\hspace*{-1.3cm}\scalebox{0.267311}{\includegraphics{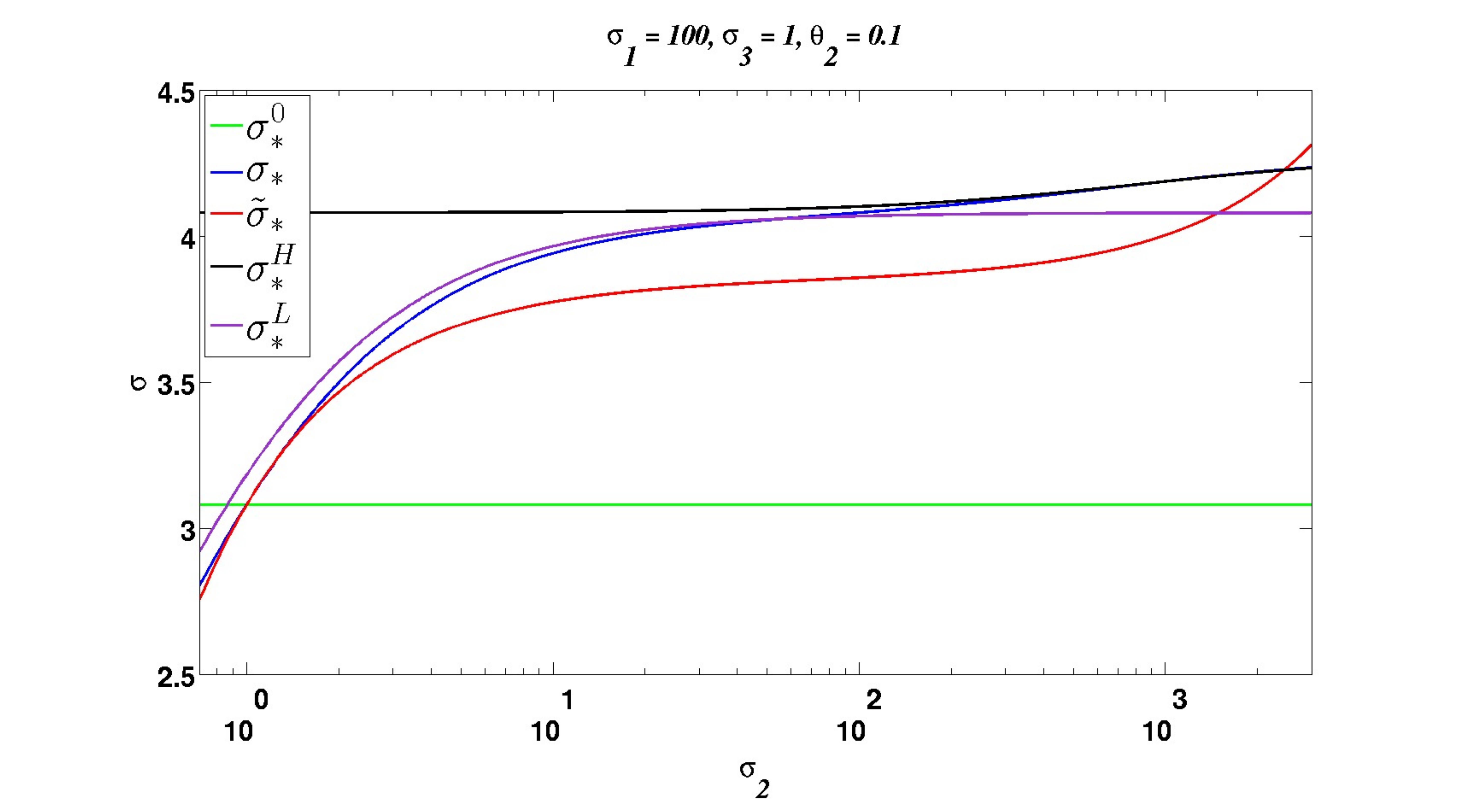}}
\end{center}
\caption{Log plot of the effective conductivity for the doubly coated sphere assemblage as a function of the interphase
conductivity $\sigma_2$. The interphase volume fraction is $\theta_2=0.1$.
The core volume fraction is $ \theta_1=3^3/4^3\approx 0.422$, which
is the same in both the original and reference configurations.
The outer coating and the core have conductivities 
$\sigma_1=100,\sigma_3=1$, respectively.}
\label{fig:num_6}
\end{figure}

\section*{Acknowledgment}
The authors would like to thank Yakov Benveniste for his valuable comments on this work.
The authors are grateful for support from the National Science Foundation through grants DMS-0707978 and DMS-1211359.

\bibliographystyle{chicago}
\bibliography{interphase}
%

\end{document}